\documentclass[12pt]{amsart}

\usepackage{amsmath}
\usepackage{amsthm}
\usepackage{amssymb}
\usepackage{amsfonts}
\usepackage{bm}
\usepackage[shortlabels]{enumitem}
\usepackage[bookmarks=true,hyperindex,pdftex,colorlinks,citecolor=red, linkcolor=blue]{hyperref}
\usepackage{centernot} %for negations
\usepackage{marginnote} % add margin notes
\usepackage{doi}
\usepackage[left=2.5cm,right=2.5cm,top=2cm,bottom=2cm]{geometry}
\usepackage{bbm}
\usepackage[all]{xy}
\usepackage{xcolor}
\usepackage{bigints}
\usepackage{cancel}
\usepackage{amsmath}
\numberwithin{equation}{section}

\theoremstyle{plain}
\newtheorem{thm}{Theorem}[section]
\newtheorem{prop}[thm]{Proposition}
\newtheorem{lemma}[thm]{Lemma}
\newtheorem{cor}[thm]{Corollary}
\newtheorem{remark}[thm]{Remark}
\newtheorem{example}[thm]{Example}
\newtheorem{nota}[thm]{Notations}

\theoremstyle{definition}
\newtheorem{df}[thm]{Definition}

\newcommand{\N}{\mathbb{N}}
\renewcommand{\textbf}[1]{\begingroup\bfseries\mathversion{bold}#1\endgroup}
\newcommand{\norm}[1]{\left\Vert #1\right\Vert}

\definecolor{darkgreen}{rgb}{.2, .6, .2}

\newcommand{\Per}{\operatorname{Per}}
\newcommand{\HC}{\operatorname{HC}}
\newcommand{\UFRec}{\operatorname{UFRec}}

\begin{document}

    \title[On Absolutely Cesàro bounded operators]{On Absolutely Cesàro bounded operators}

	\author[A. Abbar]{Arafat Abbar}
	
	\author[L. Arnold]{Loris Arnold}
	
	\author[C. Coine]{Cl\'ement Coine}
	
	\address[A. Abbar]{Department of Mathematics, École Normale Supérieure, Cadi Ayyad University, Marrakech, Morocco}
    \email{a.abbar@uca.ac.ma} 
	
	\address[C. Coine]{UNICAEN, CNRS, LMNO, 14000 Caen, France}
	\email{clement.coine@unicaen.fr}
	
	\address[L. Arnold]{UNICAEN, CNRS, LMNO, 14000 Caen, France}
	\email{lfj.arld@gmail.com}
	
	\date{}
	
	\begin{abstract}
		We study $p$-absolutely Ces\`aro bounded operators, with particular emphasis on self-improvement, weighted shifts, and linear dynamics. Our first main result shows that, for $1<p<\infty$, every positive absolutely Ces\`aro bounded operator on $L^p(\Omega)$ is automatically $p$-absolutely Ces\`aro bounded. We then characterize $q$-absolute Ces\`aro boundedness of backward shifts on weighted $\ell^p$-spaces for $q\geq p$ and use this characterization to construct examples exhibiting a wide range of possible growth rates of the powers. In the dynamical direction, we obtain new obstructions to absolute and strong Ces\`aro boundedness; in particular, no strongly Ces\`aro bounded operator on a nonzero Banach space is chaotic or upper frequently hypercyclic. We also introduce Ces\`aro ratio-boundedness and compare it with absolute Ces\`aro boundedness. Finally, a general Baire-category principle yields a genericity theorem for the failure of $p$-absolute Ces\`aro boundedness.
	\end{abstract}
	
	\subjclass[2020]{Primary 47A35; Secondary 47A16, 47B37}
	
	\keywords{absolute Ces\`aro boundedness, positive operators, weighted shifts, linear dynamics, hypercyclicity}
	
	\maketitle
	
	\section{Introduction}
	
	Let $X$ be a Banach space, and let $\mathcal{L}(X)$ denote the Banach space of bounded linear operators on $X$. For $p>0$, an operator $T\in\mathcal{L}(X)$ is said to be \textit{$p$-absolutely Ces\`aro bounded} ($p$-ACB for short) if there exists a constant $C>0$ such that
	\[
	\sup_{n\geq 1}\frac{1}{n}\sum_{k=0}^{n-1}\|T^k x\|^p
	\leq C\|x\|^p,
	\quad \forall x\in X.
	\]
	We denote by $C_{p,\mathrm{ac}}(T)$, or simply by $C_{p,\mathrm{ac}}$ when there is no ambiguity, the smallest constant for which this estimate holds. When $p=1$, we simply say that $T$ is absolutely Ces\`aro bounded (ACB).
	
	This notion was introduced for $p=1$ by Luo and Hou in~\cite{Hou}. It is worth noting, however, that the proof of~\cite[Theorem~3.7.3]{Bel} already provides an operator satisfying this condition. The case $p=2$ had previously been investigated by van Casteren in \cite{Cast2,Cast}, while the general $p$-formulation was later considered by Cohen, Cuny, Eisner, and Lin in~\cite{Cuny1}.
	
	Berm\'udez, Bonilla, M\"uller, and Peris proved in~\cite{Mull1} that an ACB operator $T$ satisfies $\|T^n\|=o(n)$, see \cite[Corollary~2.6]{Mull1}. They also proved that, when $T$ acts on a Hilbert space, $\|T^n\|=o(\sqrt{n})$, see~\cite[Theorem~2.4]{Mull1}. These estimates were improved by Cohen, Cuny, Eisner, and Lin in~\cite{Cuny1}. More precisely, they proved that, if $T$ is $p$-ACB, then there exists $\varepsilon>0$ such that
	\(
	\|T^n\|=\mathcal{O}\bigl(n^{1/p-\varepsilon}\bigr),
	\)
	see~\cite[Proposition~3.1]{Cuny1}. In particular, for ACB operators on Hilbert spaces, they obtained
	\(
	\|T^n\|=\mathcal{O}\bigl(n^{1/2-\varepsilon}\bigr)
	\)
	for some $0<\varepsilon<1/2$; see~\cite[Theorem~4.4]{Cuny1}. We shall use the following form of \cite[Proposition~3.1]{Cuny1}; although stated there for $p\geq1$, the proof works verbatim for $p>0$.
	
	\begin{prop}\label{normpACB}
		Let $X$ be a Banach space, let $T\in\mathcal{L}(X)$, and let $0<p<\infty$. If $T$ is $p$-absolutely Ces\`aro bounded, then there exists $\varepsilon>0$ such that
		\[
		\|T^n\|=\mathcal{O}\left(n^{1/p-\varepsilon}\right).
		\]
	\end{prop}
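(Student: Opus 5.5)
The plan is to compare $\|T^N x\|$ with the Ces\`aro sums of the orbit, using submultiplicativity $\|T^N x\|\le \|T^{N-k}\|\,\|T^k x\|$ for $0\le k<N$. Write $C=C_{p,\mathrm{ac}}(T)$ and $M_m=\max_{1\le j\le m}\|T^j\|$; this maximum is nondecreasing. The basic inequality comes from raising this to the $p$-th power, dividing by $\|T^{N-k}\|^p$, and summing over $k<N$:
\[
\|T^N x\|^p\sum_{k=0}^{N-1}\|T^{N-k}\|^{-p}\le \sum_{k=0}^{N-1}\|T^k x\|^p\le CN\|x\|^p .
\]
Hence
\[
\|T^N\|^p\le \frac{CN}{\sum_{j=1}^{N}\|T^j\|^{-p}}\le \frac{CN}{\sum_{j=1}^{N}M_j^{-p}} .\tag{$\ast$}
\]
This holds for every $p>0$, since it uses only the triangle-free estimate above and the definition of $p$-ACB. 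That is why the argument of \cite[Proposition~3.1]{Cuny1} should carry over from $p\ge1$.

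\emph{Step 1 (a priori bound $M_N^p=\mathcal O(N)$).} The denominator in $(\ast)$ is a sum of $N$ terms, each at least $M_N^{-p}$. This alone is not enough, so I argue by induction instead. Suppose $M_j^p\le Kj$ for all $j<N$, with $K$ large. Then $\sum_{j\le N}M_j^{-p}\ge K^{-1}\sum_{j<N}j^{-1}\ge K^{-1}\log N$. Plugging this into $(\ast)$ gives $\|T^N\|^p\le CKN/\log N$, which is at most $KN$ once $\log N\ge C$. The finitely many small $N$ are absorbed by enlarging $K$. This already gives the stronger bound $M_N^p=\mathcal O(N/\log N)$.

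\emph{Step 2 (bootstrap to a power saving).} I will iterate $(\ast)$ through a ``multi-scale'' form. Fix a large integer $L$ and compare the scales $n$ and $Ln$. Split $[0,Ln)$ into the blocks $[jn,(j+1)n)$. On the $j$-th block we have $\|T^{Ln-k}\|\le M_{(L-j)n}$, so $(\ast)$ yields
\[
M_{Ln}^p\le \frac{CL}{\sum_{j=0}^{L-1}M_{(L-j)n}^{-p}}\le \frac{CL\,M_{Ln}^p}{\sum_{i=1}^{L}(M_{Ln}/M_{in})^{p}} .
\]
Now set $\phi(n)=M_n^p/n$. Using monotonicity of $M$ together with Step 1, the sum $\sum_{i=1}^{L}(M_{Ln}/M_{in})^p$ should be at least $c\,\frac{\phi(Ln)}{\max_{m\le Ln}\phi(m)}\,L\log L$. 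Choosing $L$ with $c\log L\ge 2C$, I aim for an inequality of the form
\[
\max_{m\le Ln}\phi(m)\le \tfrac12\cdot\text{const}\cdot\max_{m\le n}\phi(m)+\mathcal O(1)\text{-type corrections},
\]
that is, the normalized quantity $\sup_{m\le L^k}M_m^p/m$ decays geometrically in $k$, at least by a factor $\theta<1$ per scale. This gives $M_{L^k}^p\le K\theta^k L^k=K(L^k)^{1-\delta}$ with $\delta=\log(1/\theta)/\log L$. Monotonicity of $M$ then interpolates between the scales $L^k$, so $\|T^n\|=\mathcal O(n^{1/p-\varepsilon})$ with $\varepsilon=\delta/p$.

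\emph{Main obstacle.} The delicate point is Step 2, namely turning the logarithmic gain of $(\ast)$ into a genuine geometric contraction across scales. The naive block estimate, which bounds the last block only, gives $M_{Ln}\le (CL)^{1/p}M_n$. That loses the constant $C\ge1$ and produces an exponent larger than $1/p$, so one must exploit all $L$ blocks simultaneously, through the harmonic-type sum, to beat $C$. I would first try to run the argument with the sequence $\psi(m)=\max_{j\le m}M_j^p/j$ and prove $\psi(Lm)\le \tfrac12\psi(m)$ for all large $m$ by contradiction. If $\psi(Lm)>\tfrac12\psi(m)$, then $M_{in}^p\le \psi(Lm)\,in\le 2\psi(Lm)\,in$ comparably on the whole range, so the denominator in $(\ast)$ is at least $\frac{\log L}{2\psi(Lm)n}$. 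This forces $M_{Ln}^p\le 2C\psi(Lm)Ln/\log L$, which contradicts the assumption for $L$ large. Carefully tracking where the maximum defining $\psi$ is attained is the part I expect to require the most care.
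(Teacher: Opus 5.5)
Your inequality $(\ast)$ is the right starting point: it is exactly the estimate the paper feeds into \cite[Lemma~3.2]{Cuny1} in its remark on $p$-Ces\`aro ratio-bounded operators. Step~1 is also correct. The gap is in Step~2, which is where the entire power saving must come from.

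The quantity you want to contract, $\psi(m)=\max_{j\le m}M_j^p/j$ (equivalently $\sup_{m\le L^k}M_m^p/m$), is a running maximum and hence nondecreasing. So $\psi(Lm)\le\frac12\psi(m)$ is impossible for $T\neq0$, and nothing of this form can decay geometrically in $k$. The contradiction argument breaks for the same reason. Bounding $M_i^p\le i\,\psi(N)$ over the \emph{whole} range $i\le N$ only turns $(\ast)$ into $\phi(N)\le \frac{C}{H_N}\psi(N)$ with $H_N=\sum_{i\le N}1/i$; your block version gives $\phi(Ln)\le\frac{2C}{\log L}\psi(Ln)$. That compares $\phi(Ln)$ with $\psi(Ln)$, not $\psi(Ln)$ with $\psi(n)$. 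It is perfectly consistent with $\psi(Ln)>\frac12\psi(n)$, because by Step~1 $\psi$ is eventually constant, its maximum being attained at a small index. As set up, the argument only reproduces $M_N^p=\mathcal O(N/\log N)$. A global maximum throws away exactly the information a bootstrap needs, namely that $\phi$ is already small on the recent range.

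There are two ways to repair it.

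The first is to localize. Take $\phi(i)=\|T^i\|^p/i$ and keep in the denominator of $(\ast)$ only the indices $N/L<i\le N$. Since $\sum_{N/L<i\le N}1/i\ge\log(L/2)$ for $N\ge L$, you get $\phi(N)\le\theta\max_{N/L<j\le N}\phi(j)$ with $\theta=C/\log(L/2)<1$ for $L$ large. For the block maxima $\Psi_k=\max_{L^{k-1}<j\le L^k}\phi(j)$ this gives $\Psi_k\le\theta\max(\Psi_{k-1},\Psi_k)$, hence $\Psi_k\le\theta\Psi_{k-1}$, which is the geometric decay you were after.

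The second, much simpler, is the partial-sum recursion of \cite[Lemma~3.2]{Cuny1}, which the paper applies to exactly this inequality. For non-nilpotent $T$, sum $\|T^Nx\|\le\|T^k\|\,\|T^{N-k}x\|$ over $0\le k\le N$. With $S_N=\sum_{k=0}^N\|T^k\|^{-p}$ this gives $\|T^N\|^pS_N\le C(N+1)$, i.e. $S_N-S_{N-1}\ge S_N/(C(N+1))$. Hence $S_N\ge\bigl(1-\frac1{C(N+1)}\bigr)^{-1}S_{N-1}\ge e^{1/(C(N+1))}S_{N-1}$. Iterating from $S_0=1$ yields $S_N\ge\bigl(\frac{N+2}{2}\bigr)^{1/C}$. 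Therefore $\|T^N\|^p\le C\,2^{1/C}(N+1)^{1-1/C}$ (recall $C\ge1$), i.e. $\varepsilon=1/(pC)$. This needs neither Step~1, nor the envelope $M$, nor any multi-scale argument.
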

	
	Cohen, Cuny, Eisner, and Lin also introduced and studied in~\cite{Cuny1} the weaker notion of strong Ces\`aro boundedness. We recall this notion, together with Ces\`aro boundedness.
	
	\begin{df}
		Let $T\in\mathcal{L}(X)$ and $p>0$.
		\begin{enumerate}
			\item We say that $T$ is \textit{Ces\`aro bounded} if the sequence
			\[
			M_n(T):=\frac{1}{n}\sum_{k=0}^{n-1}T^k,
			\, n\geq1,
			\]
			is bounded in $\mathcal{L}(X)$.
			
			\item We say that $T$ is \textit{$p$-strongly Ces\`aro bounded} ($p$-SCB for short) if there exists a constant $C>0$ such that
			\[
			\sup_{n\geq1}\frac{1}{n}\sum_{k=0}^{n-1}
			\left|\left\langle x^*,T^k x\right\rangle\right|^p
			\leq C\|x\|^p\|x^*\|^p,
			\qquad x\in X,\ x^*\in X^*.
			\]
			We denote by $C_{p,\mathrm{sc}}(T)$, or simply by $C_{p,\mathrm{sc}}$ when there is no ambiguity, the smallest constant for which this estimate holds. When $p=1$, we simply say that $T$ is \emph{strongly Ces\`aro bounded} (SCB).
		\end{enumerate}
	\end{df}
	
	We have the following chain of implications:
	\[
	\text{power bounded}
	\ \Longrightarrow\
	\text{ACB}
	\ \Longrightarrow\
	\text{SCB}
	\ \Longrightarrow\
	\text{Ces\`aro bounded}.
	\]
	
	The parameter $p$ in the definition above leads to a natural question: to what extent does absolute Ces\`aro boundedness depend on the exponent? A first simple observation is the following, which is a straightforward application of Jensen's inequality.
	
	\begin{prop}\label{qimpliesp}
		Let $X$ be a Banach space, let $T \in \mathcal{L}(X)$ and let $0<r \leq r'<\infty$. If $T$ is $r'$-absolutely Ces\`aro bounded (resp. $r'$-SCB), it is $r$-absolutely Ces\`aro bounded (resp. $r$-SCB).
	\end{prop}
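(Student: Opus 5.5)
The plan is to apply Jensen's inequality, in the form of the power-mean inequality, to the finite averages that appear in the definitions. Fix $n\geq1$ and nonnegative numbers $a_0,\dots,a_{n-1}$. Since $s:=r'/r\geq1$, the map $t\mapsto t^{s}$ is convex on $[0,\infty)$. Applying Jensen to the uniform probability measure on $\{0,\dots,n-1\}$ and the values $a_k^r$ gives
\[
\Bigl(\frac1n\sum_{k=0}^{n-1}a_k^r\Bigr)^{r'/r}\leq \frac1n\sum_{k=0}^{n-1}a_k^{r'} .
\]
Equivalently, $\frac1n\sum_{k=0}^{n-1} a_k^r\leq \bigl(\frac1n\sum_{k=0}^{n-1} a_k^{r'}\bigr)^{r/r'}$.

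For the ACB statement, take $a_k=\|T^kx\|$. The $r'$-ACB hypothesis bounds the right-hand average by $C_{r',\mathrm{ac}}\|x\|^{r'}$ for every $n$. Raising this bound to the power $r/r'$ yields
\[
\sup_{n\ge1}\frac1n\sum_{k=0}^{n-1}\|T^kx\|^r\leq C_{r',\mathrm{ac}}^{\,r/r'}\,\|x\|^r .
\]
So $T$ is $r$-ACB, with $C_{r,\mathrm{ac}}\leq C_{r',\mathrm{ac}}^{r/r'}$.

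The SCB case is identical with $a_k=|\langle x^*,T^kx\rangle|$. This gives the bound $C_{r',\mathrm{sc}}^{r/r'}\|x\|^r\|x^*\|^r$, because $(\|x\|^{r'}\|x^*\|^{r'})^{r/r'}=\|x\|^r\|x^*\|^r$.

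There is no real obstacle. The only point to check is that the inequality applies for all exponents $0<r\le r'$, including $r<1$. This holds because convexity depends only on the ratio $r'/r\geq1$, not on $r$ itself being at least $1$.
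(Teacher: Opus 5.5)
Your proof is correct and matches the paper's approach: the paper gives no written proof and only describes the proposition as a straightforward application of Jensen's inequality. That is exactly the power-mean step you carry out with the convex map $t\mapsto t^{r'/r}$, and it also yields your explicit bound $C_{r,\mathrm{ac}}\leq C_{r',\mathrm{ac}}^{r/r'}$ (and the analogous bound for the SCB constants).
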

	
	The dependence on the exponent raises a natural self-improvement problem:
	under which assumptions does ACB imply $p$-ACB for some $p>1$?
	On Hilbert spaces, every absolutely Ces\`aro bounded operator is $2$-absolutely Ces\`aro bounded; see \cite[Theorem~4.4]{Cuny1}.
	The situation on general $L^p$-spaces is different. 
	Cuny proved in \cite[Corollary~2.6]{Cuny2} that an ACB operator on $L^p(\Omega)$ satisfies a weaker averaged estimate involving the exponents $\min(p,2)$ and $\max(p,2)$, which does not yield $p$-absolute Ces\`aro boundedness when $p\neq2$.
	Moreover, for $p>2$ and $r>2$, there are ACB operators on $L^p$-spaces which fail to be $r$-ACB; see \cite[Proposition~2.8]{Cuny2}. 
	Our first main result shows that positivity restores the endpoint self-improvement: for every $1<p<\infty$, a positive ACB operator on $L^p(\Omega)$ is automatically $p$-ACB. The proof combines Krivine's inequality for positive operators on $L^p$ with the ACB estimate to derive uniform $p$-summability bounds on blocks of the orbit.
	
	The second part of the paper is devoted to a more explicit analysis of the dependence on the exponent.
	We consider backward shifts on weighted $\ell^p$-spaces, a particularly convenient class of operators for which the $p$-ACB property can be expressed directly in terms of the weight.
	This allows us to construct a wide range of examples and counterexamples. 
	In particular, we obtain a characterization of $q$-absolute Ces\`aro boundedness for $q\geq p$ in terms of a simple condition on the weight. 
	Under additional regularity assumptions on the weight, this condition is equivalent to Ces\`aro boundedness. 
	These results provide, among other things, examples showing that Ces\`aro boundedness and absolute Ces\`aro boundedness may behave differently, as well as examples illustrating different possible growth rates of the powers of the shift.
	
	We next investigate the interaction between Ces\`aro boundedness conditions and linear dynamics. One of the central concepts in linear dynamics is hypercyclicity. An operator $T\in\mathcal{L}(X)$ is said to be hypercyclic if there exists a vector $x\in X$ whose orbit under $T$, defined by
\[
\operatorname{Orb}(x,T):=\{T^n x:\ n\in\mathbb{N}_0\},
\]
is dense in $X$. Such a vector is called a hypercyclic vector for $T$, and the set of all hypercyclic vectors of $T$ is denoted by $\HC(T)$. Here, $\mathbb{N}_0=\mathbb{N}\cup\{0\}$, where $\mathbb{N}=\{1,2,\ldots\}$. The operator $T$ is said to be mixing if, for every pair of nonempty open subsets $U,V\subset X$, there exists $N\in\mathbb{N}$ such that $T^n(U)\cap V\neq\varnothing$ for every $n\geq N$. It is well known that every mixing operator is hypercyclic. If $T$ is hypercyclic and its set of periodic vectors $\Per(T):=\{x\in X:\ T^n x=x \text{ for some } n\geq 1\}$ is dense in $X$, then $T$ is called chaotic (in the sense of Devaney). We refer to \cite{BayartMatheron,GrossePeris} for general background on linear dynamics. For $x\in X$ and $U\subset X$, we define $N_T(x,U):=\{n\in\mathbb{N}_0:\ T^n x\in U\}$, called the return set from $x$ to $U$. For a set $A\subset\mathbb{N}_0$, its upper and lower densities are respectively defined by
\[
\overline{\operatorname{dens}}(A)
 :=\limsup_{N\to\infty}
 \frac{|A\cap\{0,\ldots,N-1\}|}{N}
\]
and
\[
\underline{\operatorname{dens}}(A)
 :=\liminf_{N\to\infty}
 \frac{|A\cap\{0,\ldots,N-1\}|}{N}.
\]

A vector $x\in X$ is called upper frequently recurrent for $T$ if $\overline{\operatorname{dens}}\bigl(N_T(x,U)\bigr)>0$ for every neighborhood $U$ of $x$. The set of all upper frequently recurrent vectors of $T$ is denoted by $\UFRec(T)$. If $\UFRec(T)$ is dense in $X$, then $T$ is called upper frequently recurrent \cite{BGLP}. A stronger notion is that of upper frequent hypercyclicity. A vector $x\in X$ is called upper frequently hypercyclic for $T$ if, for every nonempty open set $U\subset X$, the return set $N_T(x,U)$ has positive upper density. If such a vector exists, then $T$ is called upper frequently hypercyclic \cite{BonillaGrosse2,Shkarin}. Similarly, a vector $x\in X$ is called frequently hypercyclic for $T$ if, for every nonempty open set $U\subset X$, the return set $N_T(x,U)$ has positive lower density. An operator $T$ is called frequently hypercyclic if it has a frequently hypercyclic vector, see \cite{BayartGrivaux}. %; we refer to \cite{BayartMatheron,GrossePeris} for background. 

	Although power-bounded operators cannot be hypercyclic, absolute Ces\`aro boundedness is compatible with much richer dynamics: mixing ACB operators were constructed in \cite{Mull1}.
	This naturally leads to the problem of identifying dynamical properties that are incompatible with ACB or SCB. 
	We first prove a criterion, weaker than the Frequent Hypercyclicity Criterion (see Section~\ref{dynamics} for its statement), which rules out absolute Ces\`aro boundedness (Theorem~\ref{thm:NACB}).
	We then show that strong Ces\`aro boundedness imposes a substantially stronger obstruction: no SCB operator on a nonzero Banach space is chaotic or upper frequently hypercyclic (Corollary~\ref{cor:nochaos}). We also introduce Ces\`aro ratio-bounded operators (Definition~\ref{df:Ces-rat-bd}); every ACB operator is Ces\`aro ratio-bounded, while the converse fails even for operators satisfying the Frequent Hypercyclicity Criterion.
	
	Finally, we revisit a genericity phenomenon established in \cite[Theorem~4]{BBP}: if an operator is not ACB, then the vectors whose Ces\`aro averages of orbit norms are unbounded form a residual set. We prove a general Baire-category principle for homogeneous families of lower semicontinuous mappings and deduce the corresponding statement for $p$-ACB operators for every $p>0$; see Theorem~\ref{ThmA} and Proposition~\ref{setofvectorspacb}.

    \smallskip
	
	To summarize, the paper is organized as follows. In Section~2, we establish the self-improvement property for positive absolutely Ces\`aro bounded operators on $L^p$-spaces, for $p>1$. More precisely, we prove that a positive $1$-absolutely Ces\`aro bounded operator on $L^p(\Omega)$ is $p$-absolutely Ces\`aro bounded, and derive the corresponding consequences for the growth of its powers.
	
	Section~3 is devoted to $p$-absolutely Ces\`aro bounded backward shifts on weighted $\ell^p$-spaces. We first obtain a characterization of $q$-absolute Ces\`aro boundedness in terms of the weight. We then study additional assumptions on the weight under which this property is equivalent to Ces\`aro boundedness. Several examples and counterexamples are presented, illustrating the different possible behaviors of the powers of the shift. We conclude the section by introducing $p$-Ces\`aro ratio-bounded operators and investigating their relation with $p$-absolute Ces\`aro boundedness.
	
	In Section~4, we study the interplay between Ces\`aro boundedness conditions and linear dynamics. Besides criteria excluding absolute Ces\`aro boundedness, we prove that strongly Ces\`aro bounded operators are neither chaotic nor upper frequently hypercyclic.
	
	Finally, Section~5 contains a general Banach--Steinhaus-type result concerning the set of vectors satisfying the $p$-absolute Ces\`aro boundedness condition. As a consequence, whenever an operator is not $p$-absolutely Ces\`aro bounded, the set of vectors for which the corresponding Ces\`aro averages of orbit norms are unbounded is a dense $G_\delta$-set.

	\section{A self-improvement property on \texorpdfstring{$L^p(\Omega)$}{Lp(Omega)}}
	
	It is known that, on a Hilbert space, if $T$ is ACB, then $T$ is $2$-ACB;
	see \cite[Theorem~4.4]{Cuny1}. On $L^p(\Omega)$-spaces, it was proved in
	\cite[Corollary~2.6]{Cuny2} that if $T$ is ACB on $L^p$, $1\leq p<\infty$,
	then there exists $C_p>0$ such that, for every $N\geq1$ and every
	$x\in L^p$,
	\[
	\sum_{k=0}^{N-1}\|T^k x\|^{p_+}
	\leq
	C_p N^{p_+/p_-}\|x\|_p^{p_+},
	\]
	where
	\[
	p_-:=\min\{p,2\},
	\qquad
	p_+:=\max\{p,2\}.
	\]
	Although \cite[Corollary~2.6]{Cuny2} is stated for $\sigma$-finite measure
	spaces, this assumption is unnecessary here, since its proof only uses the
	type and cotype of $L^p$, which are the same for arbitrary measure spaces.
	This estimate is weaker than $p$-absolute Ces\`aro boundedness when
	$p\neq2$. Moreover, \cite[Proposition~2.8]{Cuny2} gives, for every $r>2$,
	the existence of an operator $T$ on $L^p(\mathbb{T})$, with $p>2$, which
	is ACB but not $r$-ACB. In particular, for $p>2$, there exist ACB
	operators which are not $p$-ACB.
	
	In this section, we show that positivity changes the situation. Namely, if $T$ is positive and ACB on $L^p$, then it is $p$-ACB. This may be viewed as the $L^p$ counterpart to \cite[Theorem 4.4]{Cuny1} in the positive setting.
	
	\begin{thm}\label{1impliesppos}
		Let $(\Omega, \mathcal{F},\mu)$ be a measure space, let $1 < p < \infty$ and let $T \colon L^p(\Omega) \to L^p(\Omega)$ be a positive operator. If $T$ is ACB, then $T$ is $p$-ACB. Moreover, there exists a constant $C_p>0$, depending only on $p$, such that
		$C_{p,\mathrm{ac}}(T) \leq C_p C_{1,\mathrm{ac}}(T)^{2p}$.
	\end{thm}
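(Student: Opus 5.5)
The plan is to first reduce to nonnegative vectors, then view the orbit block as an element of the vector-valued lattice $L^p(\ell^p_N)$, and use Krivine's inequality together with the ACB estimate, applied twice, to bound it. Since $T$ is positive, $|T^k x|\le T^k|x|$ pointwise for every $k$, so $\|T^kx\|_p\le\|T^k|x|\|_p$ and $\||x|\|_p=\|x\|_p$. It therefore suffices to prove
\[
\sum_{k=0}^{N-1}\|T^k x\|_p^p\le C_pC_{1,\mathrm{ac}}^{2p}\,N\,\|x\|_p^p
\qquad\text{for all } x\ge0 .
\]
Write $C:=C_{1,\mathrm{ac}}(T)$ and $f_k:=T^kx\ge0$. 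Note that $\sum_{k<N}\|f_k\|_p^p=\bigl\|(\sum_{k<N}|f_k|^p)^{1/p}\bigr\|_p^p$.

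The tool is Krivine's inequality. A positive operator $S$ on $L^p$ satisfies
\[
\Bigl\|\Bigl(\sum_k|Sg_k|^r\Bigr)^{1/r}\Bigr\|_p\le\|S\|\,\Bigl\|\Bigl(\sum_k|g_k|^r\Bigr)^{1/r}\Bigr\|_p
\]
for every $1\le r\le\infty$. I will apply it to the Cesàro averages $S=M_n(T)$. These are positive, and $\|M_n(T)\|\le C$ by the ACB hypothesis.

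The core step is a block comparison. Fix $N$ and look at indices $k\in[N,2N)$. For each such $k$,
\[
f_{k+j}=T^{k}f_j,\qquad\text{so}\qquad \frac1N\sum_{j<N}f_{k+j}=T^{k}\Bigl(\frac1N\sum_{j<N}f_j\Bigr)=T^kM_N(T)x .
\]
The aim is to use this in two directions.

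First, summing over $k$ and applying the ACB estimate to the vector $y=M_N(T)x$ gives
\[
\sum_{k<N}\|T^ky\|_p\le CN\|y\|_p\le C^2N\|x\|_p .
\]
This is an $\ell^1$ bound on the averaged orbit.

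Second, I would control the $\ell^\infty$-type quantity $\sup_{k<N}\|f_k\|_p$ on dyadic blocks. The crude bound from Proposition~\ref{normpACB} is only $\|T^k\|=\mathcal O(k^{1-\varepsilon})$, which is too weak, so I instead want a block estimate of the form
\[
\Bigl\|\Bigl(\sum_{k\in[N,2N)}|f_k|^p\Bigr)^{1/p}\Bigr\|_p\lesssim C^{2}N^{1/p}\|x\|_p ,
\]
which is exactly what is needed. To get it, I would apply Krivine's inequality to the positive operator $M_N(T)$ acting on the family $(f_{k})_{k<N}$ with $r=p$. This bounds the $\ell^p$-square function of the shifted averages by $C$ times that of the original block. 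Pointwise positivity then gives $f_{k}\le$ (a combination of averages), so the block $[N,2N)$ is dominated by the block $[0,N)$ with constant $C$. Iterating dyadically, it would remain to bound the first-level object $\|(\sum_{k<N}|f_k|^p)^{1/p}\|_p$ by the $\ell^1$ quantity above. For that I would interpolate with the elementary pointwise inequality for nonnegative functions, $\sum_k f_k^p\le(\sup_k f_k)^{p-1}\sum_k f_k$, together with Hölder. The two uses of the ACB constant would account for the factor $C^{2p}$.

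The main obstacle is this second step: converting the $L^p(\ell^1)$ information coming from ACB into $L^p(\ell^p)$ control without losing a power of $N$. The naive bound $\sum_k\|f_k\|^p\le\|\sum_kf_k\|^p$ loses a factor $N^{p-1}$. I would first try to avoid this loss by choosing, via the Ryff/duality description of $\ell^p$ sums, norming functionals $g_k\in L^{p'}$ with $\langle f_k,g_k\rangle=\|f_k\|_p$. I would then apply Krivine to the adjoint averages $M_N(T)^*$, which are positive on $L^{p'}$ with the same norm bound, so that the estimate becomes a bilinear ACB-type inequality. This is the place where $p>1$, and hence reflexivity with a positive adjoint on $L^{p'}$, should be used essentially.
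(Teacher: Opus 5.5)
\paragraph{Verdict.} There is a genuine gap. Your proposal does not close at the step you flag yourself, and the steps around it do not work either.

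\paragraph{Where the argument breaks.}
\begin{itemize}
\item Krivine's inequality for $S=M_N(T)$ controls the $\ell^p$-sum of the averages $M_N(T)f_k=\frac1N\sum_{j<N}f_{k+j}$. You cannot recover the individual $f_k$ from these averages without loss. The best pointwise domination is $f_k\le\sum_{j=k-N+1}^{k}M_N(T)f_j$. At a point where a single $f_{k_0}$ is nonzero, one only has $\sum_{j<N}(M_N(T)f_j)^p\le N^{1-p}\sum_kf_k^p$. So ``the block $[N,2N)$ is dominated by the block $[0,N)$ with constant $C$'' does not follow from positivity.
\item Even if such a comparison held, iterating it dyadically gives only $\sum_{k<2^m}\|T^kx\|_p^p\le(1+C)^m\|x\|_p^p=2^{m\log_2(1+C)}\|x\|_p^p$. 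This is superlinear in $N=2^m$.
\item The ``first-level object'' $\|(\sum_{k<N}f_k^p)^{1/p}\|_p$ is exactly the quantity to be estimated, so the reduction is circular.
\item Your $\ell^1$ bound for $M_N(T)x$ carries no information beyond ACB for $x$ itself.
\item The interpolation $\sum_kf_k^p\le(\sup_kf_k)^{p-1}\sum_kf_k$ with H\"older gives $\sum_{k<N}\|f_k\|_p^p\le\|\sup_{k<N}f_k\|_p^{p-1}\,CN\|x\|_p$. A linear bound would then need $\|\sup_{k<N}T^kx\|_p\lesssim\|x\|_p$ uniformly in $N$. That quantity dominates $\|T^{N-1}x\|_p$, so this would force power boundedness. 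Power boundedness fails, for example, for the positive ACB shift of Example~\ref{mainexample}. If you instead bound the supremum by the $\ell^p$-sum, you get back the naive loss $N^{p-1}$.
\item The duality/adjoint suggestion is not developed far enough to assess.
\end{itemize}

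\paragraph{The missing idea.} You need to normalize by the orbit norms, so that $\ell^p$-information enters the plain ($\ell^1$-in-time) ACB estimate through a single vector.
\begin{itemize}
\item With $f\ge0$, $\|f\|_p=1$ and $a_k=\|T^kf\|_p$, the paper sets $y=(\sum_{k=1}^M(T^kf/(a_k+\varepsilon))^p)^{1/p}$. Then $\|y\|_p^p\le M$ regardless of the sizes of the $a_k$, and ACB gives $\|\sum_{j<M}T^jy\|_p\le CM^{1+1/p}$.
\item The pointwise lattice form of Krivine's inequality is applied with $S=T^j$, not $M_N(T)$. It gives $T^jy\ge(\sum_k(T^{j+k}f/(a_k+\varepsilon))^p)^{1/p}$.
\item Minkowski in $\ell^p$ then turns the sum over $j$ into $(\sum_n(T^nf)^pH_n^p)^{1/p}$. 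Here $H_n=\sum_k(a_k+\varepsilon)^{-1}$ runs over a window containing $\{\lceil M/2\rceil,\dots,M\}$ whenever $M\le n<M+\lfloor M/2\rfloor$.
\item Cauchy--Schwarz together with $\sum_{k\le M}a_k\le C(M+1)$ gives $H_n\gtrsim M/C$. Hence $\sum_{n=M}^{M+\lfloor M/2\rfloor-1}a_n^p\le 8^pC^{2p}M$. This is where the two factors of $C$ come from.
\item This block bound is additive and linear in $M$, not a multiplicative comparison with the previous block. So summing it over $M_{r+1}=M_r+\lfloor M_r/2\rfloor$ gives the linear bound.
\end{itemize}
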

	
	The proof relies on the following standard consequence of Krivine's functional calculus for Banach lattices; see, for instance, \cite[Proposition~4.14]{Dales}.
	
	\begin{lemma}\label{lemKrivine}
		Let $S \colon L^p(\Omega) \to L^p(\Omega)$ be a positive operator. For any family of non-negative functions $f_1, \dots, f_m$ in $L^p(\Omega)$,
		\begin{equation}\label{eqKrivine}
			\left( \sum_{k=1}^m (S f_k)^p \right)^{1/p} \le S \left( \left( \sum_{k=1}^m f_k^p \right)^{1/p} \right).
		\end{equation}
	\end{lemma}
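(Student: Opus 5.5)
The plan is to prove Lemma~\ref{lemKrivine} directly, without the full Krivine calculus. The idea is to write the finite $\ell^p$-norm as a supremum of linear functionals, and to pass $S$ through each of them using only linearity and positivity. Write $q:=p/(p-1)$ for the conjugate exponent and set
\[
F:=\Bigl(\sum_{k=1}^m f_k^p\Bigr)^{1/p}.
\]
Then $F\in L^p(\Omega)$, since $F\le \sum_k f_k$ pointwise, and $F\geq 0$. Since $S$ is positive and each $f_k\ge0$, we also have $Sf_k\ge 0$ a.e.

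\textbf{Step 1 (one dual vector at a time).} Let $D$ be the set of vectors $b=(b_1,\dots,b_m)$ with nonnegative rational coordinates and $\sum_k b_k^q\le 1$. For a fixed $b\in D$, Hölder's inequality in $\mathbb{R}^m$ gives, pointwise on $\Omega$,
\[
\sum_{k=1}^m b_k f_k\le \Bigl(\sum_k b_k^q\Bigr)^{1/q}F\le F.
\]
Applying the positive linear operator $S$ to $F-\sum_k b_kf_k\geq 0$ then yields
\[
\sum_k b_k\, Sf_k\le SF \quad \text{a.e.}
\]
The exceptional null set $N_b$ depends on $b$.

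\textbf{Step 2 (taking the supremum).} Because $D$ is countable, $N:=\bigcup_{b\in D}N_b$ is still a null set. Fix $\omega\notin N$ (also outside the null sets where some $Sf_k(\omega)<0$), and put $a_k:=Sf_k(\omega)\ge0$. By the equality case of Hölder, $(\sum_k a_k^p)^{1/p}=\sup\{\sum_k b_ka_k\}$, where the supremum runs over nonnegative $b$ in the closed unit ball of $\ell^q_m$. The map $b\mapsto\sum_k b_ka_k$ is continuous, and $D$ is dense in that positive part of the ball. Hence the supremum over $D$ gives the same value, and therefore
\[
\Bigl(\sum_k (Sf_k)^p\Bigr)^{1/p}(\omega)\le SF(\omega).
\]
This is \eqref{eqKrivine}.

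The only delicate point, which I expect to be the main (mild) obstacle, is the bookkeeping of null sets. Each inequality $S(\cdot)\ge0$ holds only almost everywhere, so one cannot take a supremum over uncountably many $b$ directly. Restricting to the countable dense set $D$ resolves this. The restriction also means no $\sigma$-finiteness is needed, consistent with the arbitrary measure space in Theorem~\ref{1impliesppos}. Everything else is pointwise finite-dimensional Hölder together with linearity and positivity of $S$.
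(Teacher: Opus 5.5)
Your proof is correct, but it takes a different route from the paper. The paper gives no proof of this lemma. It quotes it as a standard consequence of Krivine's functional calculus for Banach lattices (Proposition~4.14 in the reference by Dales et al.). There, $\bigl(\sum_k |x_k|^p\bigr)^{1/p}$ is defined abstractly in any Banach lattice, and the inequality comes from identifying this element with the lattice supremum of $\sum_k b_k x_k$ over $b$ in the unit ball of $\ell^q_m$, then pushing the positive operator inside.

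Your argument is the concrete $L^p$ version of that mechanism. H\"older's inequality in $\mathbb{R}^m$, together with linearity and positivity of $S$, gives the inequality for each fixed coefficient vector. Restricting to the countable dense set $D$ of rational coefficient vectors then handles the almost-everywhere exceptional sets. Your density claim is fine: scale $b$ by some $t<1$ and round each coordinate down to a nonnegative rational.

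What the citation buys is generality: arbitrary Banach lattices, positive maps between different lattices, and other positively homogeneous functions. What your route buys is a self-contained proof with no lattice machinery, explicit null-set bookkeeping, and validity on an arbitrary measure space, as Theorem~\ref{1impliesppos} requires.

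One small remark: using $q=p/(p-1)$ implicitly assumes $p>1$. For $p=1$ the inequality is an equality by linearity, and in any case the lemma is only used for $1<p<\infty$.
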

	
	\begin{proof}[Proof of Theorem \ref{1impliesppos}]
		Without loss of generality, we may assume that $\|f\|_p=1$ and that $f \ge 0$, since the positivity of $T$ yields $|T^n f| \le T^n |f|$. For every $n \ge 0$, set  
        $$a_n := \norm{T^n f}_p.$$
		Fix an integer $M \ge 2$ and a parameter $\varepsilon > 0$. For $1 \le k \le M$, define
		\begin{equation*}
			v_k := \frac{T^k f}{a_k + \varepsilon}, \quad \text{and} \quad y := \left( \sum_{k=1}^M v_k^p \right)^{1/p}.
		\end{equation*}
		Integrating $y^p$ yields
		\begin{equation}\label{norm_y}
			\norm{y}_p^p = \sum_{k=1}^M \norm{v_k}_p^p = \sum_{k=1}^M \left( \frac{a_k}{a_k + \varepsilon} \right)^p \le M.
		\end{equation}
		Set
		\[
		u := \sum_{j=0}^{M-1} T^j y = \sum_{j=0}^{M-1} T^j \Biggl( \sum_{k=1}^M \Biggl( \frac{T^k f}{a_k+\varepsilon} \Biggr)^p \Biggr)^{1/p}.
		\]
		The ACB hypothesis applied to $y$ and inequality \eqref{norm_y} give
		\begin{equation}\label{eqnormp_u}
			\norm{u}_p^p \le \left(\sum_{j=0}^{M-1} \norm{T^j y}_p\right)^p \le C_{1,\mathrm{ac}}^p M^p \norm{y}_p^p \leq C_{1,\mathrm{ac}}^p M^{p+1}.
		\end{equation}
		On the other hand, by \eqref{eqKrivine}, the function $u$ satisfies
		\[
		u \ge \sum_{j=0}^{M-1} \left( \sum_{k=1}^M \left( \frac{T^{j+k} f}{a_k + \varepsilon} \right)^p \right)^{1/p} = \sum_{j=0}^{M-1} \left( \sum_{n=1}^{2M-1} b_{j,n}^p \right)^{1/p}
		\]
		where  $b_{j,n}$ is defined for every $j \in \{0, \dots, M-1\}$ and every $n\geq 1$ by
		\[
		b_{j,n} :=
		\begin{cases}
			\dfrac{T^n f}{a_{n-j} + \varepsilon}, & \text{if } 1 \le n-j \le M, \\[3mm]
			0, & \text{otherwise.}
		\end{cases}
		\]
		By Minkowski's inequality in $\ell^p$,
		\[
		u \ge \left( \sum_{n=1}^{2M-1} \left( \sum_{j=0}^{M-1} b_{j,n} \right)^p \right)^{1/p} = \left( \sum_{n=1}^{2M-1} (T^n f)^p H_n^p \right)^{1/p},
		\]
		where
		\begin{equation*}
			H_n := \sum_{k=\max(1, \, n-M+1)}^{\min(M, \, n)} \frac{1}{a_k + \varepsilon}.
		\end{equation*}
		Taking the $L^p$-norm, we obtain
		\begin{equation}\label{eqnorm_u_lower}
			\norm{u}_p^p \ge \sum_{n=1}^{2M-1} a_n^p H_n^p.
		\end{equation}
		Note that for every $n$ in the range
		\[
		M \le n \le M + \lfloor M/2 \rfloor - 1,
		\]
		the summation index set defining $H_n$ contains the upper half $I_M := \{\lceil M/2 \rceil, \dots, M\}$. Thus, for such $n$,
		\[
		H_n\ge \sum_{k \in I_M} \frac{1}{a_k + \varepsilon}.
		\]
		By the Cauchy--Schwarz inequality,
		\[
		|I_M|^2 = \left( \sum_{k \in I_M} 1 \right)^2 \le \left( \sum_{k \in I_M} (a_k + \varepsilon) \right) \left( \sum_{k \in I_M} \frac{1}{a_k + \varepsilon} \right).
		\]
		The ACB hypothesis yields
		\[
		\sum_{k \in I_M} a_k \le \sum_{k=0}^M a_k \le C_{1,\mathrm{ac}} (M+1)
		\]
		and since $|I_M| \ge M/2$,
		\begin{equation}\label{eqHn_bound}
			H_n \ge \frac{(M/2)^2}{C_{1,\mathrm{ac}}(M+1) + M \varepsilon}.
		\end{equation}
		Restricting the sum in \eqref{eqnorm_u_lower} to $[M,M+ \lfloor M/2 \rfloor - 1]$ and using \eqref{eqHn_bound}, we obtain
		\[
		\norm{u}_p^p \ge \frac{(M/2)^{2p}}{(C_{1,\mathrm{ac}}(M+1) + M \varepsilon)^p} \sum_{n=M}^{M+\lfloor M/2 \rfloor - 1} a_n^p.
		\]
		Comparing this inequality with the upper bound \eqref{eqnormp_u}, we obtain
		$$
		\sum_{n=M}^{M+\lfloor M/2 \rfloor - 1} \norm{T^n f}_p^p = \sum_{n=M}^{M+\lfloor M/2 \rfloor - 1} a_n^p \le \frac{(C_{1,\mathrm{ac}}(M+1) + M \varepsilon)^p}{(M/2)^{2p}} C_{1,\mathrm{ac}}^p M^{p+1}.
		$$
		Letting $\varepsilon \rightarrow 0$ gives
		\begin{equation}\label{mainEsti}
			\sum_{n=M}^{M+\lfloor M/2 \rfloor - 1} \norm{T^n f}_p^p \le 4^p C_{1,\mathrm{ac}}^{2p} \frac{(M+1)^pM^{p+1}}{M^{2p}} \leq 8^p C_{1,\mathrm{ac}}^{2p} M.
		\end{equation}
		Let $(M_r)_{r \ge 0}$ be the increasing sequence of integers defined by $M_0 = 2$ and $M_{r+1} = M_r + \left\lfloor \frac{M_r}{2} \right\rfloor$. It is easy to check that $M_r \sim C \left(\frac{3}{2}\right)^r$ for some constant $C$ from which we deduce the existence of $D>0$ such that, for every integer $L$,
		\begin{equation}\label{mainEstiseq}
			\sum_{r=0}^L M_r \le DM_{L}.
		\end{equation}
		For an integer $N \geq 2$, we let $R \ge 0$ be the unique integer such that $M_R \le N < M_{R+1}$. Applying the block estimate \eqref{mainEsti} to each interval $[M_r, M_{r+1})$ together with \eqref{mainEstiseq}, we obtain
		\begin{align*}
			\sum_{n=2}^{N-1} \norm{T^n f}_p^p &\le \sum_{r=0}^R \left( \sum_{n=M_r}^{M_{r+1}-1} \norm{T^n f}_p^p \right) \\
			& \le 8^p C_{1,\mathrm{ac}}^{2p} \sum_{r=0}^R M_r\\
			&\le 8^p C_{1,\mathrm{ac}}^{2p} DM_R\\
			&\leq 8^p C_{1,\mathrm{ac}}^{2p} D N.
		\end{align*}
		Since $C_{1,\mathrm{ac}}\geq1$ and the terms corresponding to $n=0,1$ are bounded by $(1+2^p)C_{1,\mathrm{ac}}^{2p}$, division by $N$ yields the $p$-ACB estimate with a constant of the form $C_p C_{1,\mathrm{ac}}^{2p}$, where $C_p$ depends only on $p$. This concludes the proof.
	\end{proof}
	
	\begin{remark}
		Theorem \ref{1impliesppos} together with Proposition \ref{normpACB} show that if $T$ is positive on some $L^p(\Omega)$ and ACB, then $\|T^n\|=\mathcal{O}\left(n^{1/p - \varepsilon}\right)$ for some $\varepsilon>0$. This improves \cite[Corollary 2.6]{Cuny2} in the case of positive operators.
	\end{remark}
	
	\section{\texorpdfstring{$p$}{p}-absolutely Ces\`aro bounded backward shifts}
	
	Let $\omega=(\omega_k)_{k\in\N}$ be a sequence of positive real numbers (called a \textit{weight}) such that $\underset{k\in\N}{\sup}\,\dfrac{\omega_k}{\omega_{k+1}}<\infty$, let $1\leq p<\infty$, and let $\mathbb K$ denote either $\mathbb R$ or $\mathbb C$. Denote by $\ell^p(\N,\omega)$ the weighted $\ell^p$-space defined by
	$$\ell^p(\N,\omega):=\Big\{(x_k)_{k\in\N}\in \mathbb{K}^{\N}:\, \sum_{k=1}^{\infty} |x_k|^p\omega_{k}^{p}<\infty\Big\}.$$
	Equipped with the norm
	$$\|x\|_{p,\omega}:=\Big(\sum_{k=1}^{\infty} |x_k|^p\omega_{k}^{p}\Big)^{1/p},$$
	it is a Banach space. Let $B$ be the backward shift operator on $\ell^p(\N,\omega)$, that is,
	$$B\left( (x_k)_{k\in\N} \right)=(x_{k+1})_{k\in\N}.$$
	It is easy to see that
	$$
	\|B^n\|_{p,\omega} := \|B^n\|_{\mathcal{L}(\ell^p(\N,\omega))} = \underset{k\in\N}{\sup}\,\dfrac{\omega_k}{\omega_{k+n}}.
	$$
	In this section, we study the $q$-absolute Ces\`aro boundedness of $B$ as an operator on $\ell^p(\N,\omega)$ and provide several examples and counterexamples. We also give, under certain assumptions on the weight $\omega$, other necessary and sufficient conditions for the $q$-absolute Ces\`aro boundedness of $B$. A characterization of $p$-absolutely Ces\`aro bounded weighted shifts on $\ell^p$ is available, see \cite[Corollary 58]{BBP2} (there, the shift is weighted, not the $\ell^p$-space). Our Theorem \ref{TheMainDisGen} is more general.
	
	In the literature, weighted backward shifts are usually considered on the classical spaces
	$\ell^p(\N)$; see, for instance, \cite{Salas95,GrosseErdmann00}. We recall that such operators
	can be identified, up to an isometric conjugacy, with backward shifts on weighted
	$\ell^p$-spaces; see, for instance, \cite{GrossePeris}.\\

	We first record a simple invariance under equivalent weights.
	
	\begin{df}\label{equivweights}
		Two weights $\omega = (\omega_n)_n$ and $\omega' = (\omega'_n)_n$ are equivalent if there exist $0<a \leq b < \infty$ such that
		$$\forall n\in \N, \quad a\omega_n \leq \omega'_n \leq b \omega_n.$$
	\end{df}
	
	When two weights are equivalent, the associated weighted $\ell^p$-spaces are isomorphic. This immediately yields the following.
	
	\begin{lemma}\label{equivweightsimpliesequivCesaro}
		Let $\omega = (\omega_n)_n$ and $\omega' = (\omega'_n)_n$ be two equivalent weights.
		Then, for every $1 \leq p<\infty$ and $r>0$, $B$ is $r$-absolutely Ces\`aro bounded (respectively, Ces\`aro bounded) on $\ell^p(\N,\omega)$ if and only if it is $r$-absolutely Ces\`aro bounded (respectively, Ces\`aro bounded) on $\ell^p(\N,\omega')$.
	\end{lemma}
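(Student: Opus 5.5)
The plan is to realize the equivalence of weights as an isomorphism that intertwines the two backward shifts. Let $J\colon \ell^p(\N,\omega)\to\ell^p(\N,\omega')$ be the identity map on sequences, $J((x_k)_k)=(x_k)_k$. Since $a\omega_k\le\omega'_k\le b\omega_k$ for all $k$, we get
\[
a\|x\|_{p,\omega}\le \|x\|_{p,\omega'}\le b\|x\|_{p,\omega}
\]
for every sequence $x$. Hence both spaces consist of the same sequences, and $J$ is a bounded bijection with $\|J\|\le b$ and $\|J^{-1}\|\le a^{-1}$. The backward shift acts by the same formula on both spaces, so $JB=BJ$ and $B^k=J^{-1}B^kJ$ for every $k\geq 0$. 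The same identification shows that the standing assumption $\sup_k\omega'_k/\omega'_{k+1}<\infty$ holds for $\omega'$ as soon as it holds for $\omega$, with the bound multiplied by $b/a$.

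For $r$-absolute Ces\`aro boundedness, suppose $B$ is $r$-ACB on $\ell^p(\N,\omega)$ with constant $C$. For $x\in\ell^p(\N,\omega')$ and $n\ge1$ I would write
\[
\frac1n\sum_{k=0}^{n-1}\|B^kx\|_{p,\omega'}^r\le \frac{b^r}{n}\sum_{k=0}^{n-1}\|B^kx\|_{p,\omega}^r\le b^rC\|x\|_{p,\omega}^r\le \Bigl(\frac ba\Bigr)^rC\|x\|_{p,\omega'}^r .
\]
This gives $C_{r,\mathrm{ac}}$ on $\ell^p(\N,\omega')$ at most $(b/a)^rC$. The converse follows by exchanging the roles of $\omega$ and $\omega'$, since $b^{-1}\omega'_k\le\omega_k\le a^{-1}\omega'_k$.

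For Ces\`aro boundedness, the same comparison applied to $M_n(B)x$ gives $\|M_n(B)\|_{\omega'}\le (b/a)\|M_n(B)\|_{\omega}$, and symmetrically in the other direction. So the sequence $(M_n(B))_n$ is bounded in one space if and only if it is bounded in the other.

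There is no real obstacle here. The only point needing care is to check that the identity map is a well-defined isomorphism between the two weighted spaces, and the two-sided norm comparison above settles this.
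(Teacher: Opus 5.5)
Your proof is correct and follows the paper's own argument: the paper just remarks that equivalent weights give isomorphic weighted $\ell^p$-spaces (through the identity map, which commutes with $B$). You make that argument explicit, including the constants $(b/a)^r$ for $r$-absolute Ces\`aro boundedness and $b/a$ for Ces\`aro boundedness.
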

	
	\subsection{Characterization of absolutely Ces\`aro bounded backward shifts}
	
	Let us introduce the following useful notations and quantities.
	
	\begin{nota} Let $\omega=(\omega_k)_{k\in\N}$ be a weight.
		For every $r>0$, denote
		$$
		a_r:=\underset{N\geq 1}{\sup}\,\underset{i\ge N}{\sup}\,\dfrac{1}{N}\sum_{k=0}^{N-1}\,\Big(\dfrac{\omega_{i-k}}{\omega_i}\Big)^{r}, \quad \text{and} \quad c_r:=\sup_{N\ge 1}\frac{1}{N}\sum_{k=1}^{N}\,\Big(\dfrac{\omega_{k}}{\omega_{N}}\Big)^{r}.
		$$
	\end{nota}
	
	We now prove the following characterization. It generalizes \cite[Corollary 58]{BBP2}, and its statement is slightly simpler than that of the latter.
	
	\begin{thm}\label{TheMainDisGen}
		Let $B$ be the backward shift operator on $\ell^p(\N,\omega)$.
		\begin{enumerate}
			\item Let $q\ge p$ and assume that $a_q<\infty$. Then $B$ is
			$q$-absolutely Ces\`aro bounded.
			
			\item Conversely, if $B$ is $r$-absolutely Ces\`aro bounded for some
			$r>0$, then $a_r<\infty$.
		\end{enumerate}
		
		\noindent In particular,
		\begin{enumerate}[resume]
			\item If $q\ge p$, then $B$ is $q$-absolutely Ces\`aro bounded if and
			only if $a_q<\infty$. Moreover, in that case,
			\[
			C_{q,\mathrm{ac}}=a_q.
			\]
		\end{enumerate}
	\end{thm}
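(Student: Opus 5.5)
The plan is to compute everything explicitly on the canonical basis and reduce both directions to the quantity $a_r$. For $x=(x_i)_{i\in\N}\in\ell^p(\N,\omega)$ and $k\ge 0$, reindexing gives
\[
\|B^k x\|_{p,\omega}^p=\sum_{j\ge1}|x_{j+k}|^p\omega_j^p=\sum_{i>k}|x_i|^p\omega_i^p\Bigl(\frac{\omega_{i-k}}{\omega_i}\Bigr)^p .
\]
I will normalize $\|x\|_{p,\omega}=1$ and set $t_i:=|x_i|^p\omega_i^p$, so that $\sum_i t_i=1$. Then $\|B^kx\|^p$ is an average of the numbers $(\omega_{i-k}/\omega_i)^p$ against the sub-probability weights $(t_i)_{i>k}$.

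\textbf{Part (2): $r$-ACB implies $a_r<\infty$.} I will test the $r$-ACB inequality on the normalized basis vectors $x=e_i/\omega_i$, which have norm $1$. For $k\le i-1$ one has $\|B^k x\|=\omega_{i-k}/\omega_i$. Fix $N\ge1$ and $i\ge N$. Every $k\le N-1$ then satisfies $k\le i-1$, so the $r$-ACB inequality reads
\[
\frac1N\sum_{k=0}^{N-1}\Bigl(\frac{\omega_{i-k}}{\omega_i}\Bigr)^r\le C_{r,\mathrm{ac}} .
\]
Taking the supremum over $i\ge N$ and then over $N$ gives $a_r\le C_{r,\mathrm{ac}}$. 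This works for every $r>0$ and any $p$.

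\textbf{Part (1): $a_q<\infty$ with $q\ge p$ implies $q$-ACB.} Since $q/p\ge1$, the map $\varphi(s)=s^{q/p}$ is convex with $\varphi(0)=0$. Jensen's inequality therefore holds for sub-probability weights, and this is the key step:
\[
\|B^kx\|^q=\Bigl(\sum_{i>k}t_i\Bigl(\frac{\omega_{i-k}}{\omega_i}\Bigr)^p\Bigr)^{q/p}\le\sum_{i>k}t_i\Bigl(\frac{\omega_{i-k}}{\omega_i}\Bigr)^q .
\]
Averaging over $0\le k\le N-1$ and exchanging the sums gives
\[
\frac1N\sum_{k=0}^{N-1}\|B^kx\|^q\le\sum_i t_i\,\frac1N\sum_{k=0}^{\min(N,i)-1}\Bigl(\frac{\omega_{i-k}}{\omega_i}\Bigr)^q .
\]
I then bound the inner average for each fixed $i$ in two cases.
\begin{itemize}
\item If $i\ge N$, the inner average is at most $a_q$ directly from the definition.
\item If $i<N$, I use $\frac1N\le\frac1i$. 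The inner average is then at most $\frac1i\sum_{k=0}^{i-1}(\omega_{i-k}/\omega_i)^q$, which is at most $a_q$ by taking $N'=i$ and the index $i\ge N'$ in the definition of $a_q$.
\end{itemize}
Since $\sum_i t_i=1$, this yields $C_{q,\mathrm{ac}}\le a_q$.

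\textbf{Part (3) and the main obstacle.} Part (3) follows by combining the two parts with $r=q$, which gives both the equivalence and $C_{q,\mathrm{ac}}=a_q$. The only delicate point is the Jensen step in Part (1), and it is exactly where the hypothesis $q\ge p$ is needed. One must check that convexity with $\varphi(0)=0$ justifies Jensen for total mass $\le1$: put the missing mass on the value $0$. One must also check that the truncation $i<N$ is absorbed by the definition of $a_q$ without any loss in the constant.
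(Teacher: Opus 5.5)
Your proposal is correct and follows essentially the same route as the paper: for (2) you test on the normalized basis vectors $e_i/\omega_i$ with $i\ge N$, and for (1) you apply Jensen to $s\mapsto s^{q/p}$ (putting the missing mass at $0$), exchange the sums, and split into the cases $i\ge N$ and $i<N$. Your bound for $i<N$ via $\frac1N\le\frac1i$ is exactly the paper's auxiliary estimate $b_q\le a_q$, so you obtain the same identity $C_{q,\mathrm{ac}}=a_q$.
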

	
	\begin{proof}
		First of all, let us denote, in this proof,
		$$
		b_r:=\underset{N\geq 2}{\sup}\,\underset{1\leq i\leq N-1}{\max}\, \dfrac{1}{N}\, \sum_{k=0}^{i-1} \Big(\dfrac{\omega_{i-k}}{\omega_i}\Big)^{r}
		$$
		and let us show that $b_r \leq a_r$. Fix $N\geq 2$. Then, for every $1\leq i \leq N-1$,
		$$
		\dfrac{1}{N}\, \sum_{k=0}^{i-1} \Big(\dfrac{\omega_{i-k}}{\omega_i}\Big)^{r} \leq \dfrac{1}{i}\, \sum_{k=0}^{i-1} \Big(\dfrac{\omega_{i-k}}{\omega_i}\Big)^{r} \leq \underset{j\geq 1}{\sup}\,\dfrac{1}{j}\, \sum_{k=0}^{j-1} \Big(\dfrac{\omega_{j-k}}{\omega_j}\Big)^{r} \leq a_r,
		$$
		It follows that
		\begin{equation}\label{brleqar}
			b_r \leq a_r.
		\end{equation}
		
		Now let $x \in \ell^p(\N,\omega) $ with $\|x\|_{p,\omega}= 1$. Assume that $q\ge p$ and that $a_q<\infty$. For every $N\geq 1$, we have
		\begin{align*}
			\sum_{k=0}^{N-1}\|B^k x\|_{p,\omega}^{q}
			=\sum_{k=0}^{N-1} \left(\sum_{i=1}^{\infty}|x_{i+k}|^{p} \omega_{i}^{p}\right)^{q/p}
			& = \sum_{k=0}^{N-1} \left(\sum_{i=k+1}^{\infty}|x_{i}|^{p} \omega_{i-k}^{p}\right)^{q/p} \\
			&=\sum_{k=0}^{N-1} \left(\sum_{i=k+1}^{\infty}\omega_i^p|x_{i}|^{p} \frac{\omega_{i-k}^{p}}{\omega_i^p} \right)^{q/p}.
		\end{align*}
		Since, for each $k\in \N_0$, $\sum_{i=k+1}^{\infty}|x_i|^p\omega_i^p\leq1$, Jensen's inequality (after completing these coefficients to a probability measure by adding a mass at $0$ if necessary) gives
		$$
		\sum_{k=0}^{N-1} \Big(\sum_{i=k+1}^{\infty}\omega_i^p|x_{i}|^{p} \frac{\omega_{i-k}^{p}}{\omega_i^p} \Big)^{q/p} \le \sum_{k=0}^{N-1} \sum_{i=k+1}^{\infty}\omega_i^p|x_{i}|^{p}\frac{\omega_{i-k}^{q}}{\omega_i^q}.
		$$
		Thus,
		\begin{align*}\label{IneAbsCeslp_q}
			\sum_{k=0}^{N-1}\|B^k x\|_{p,\omega}^{q}&\le \sum_{i=1}^{\infty} \omega_i^p|x_{i}|^{p}\sum_{k=0}^{\min\{i-1,N-1\}}\frac{\omega_{i-k}^{q}}{\omega_i^q}  \nonumber  \\
			&\le \sum_{i=1}^{N-1}\omega_i^p|x_{i}|^{p}\sum_{k=0}^{i-1}\frac{\omega_{i-k}^{q}}{\omega_i^q}+\sum_{i=N}^{\infty}\omega_i^p|x_{i}|^{p}\sum_{k=0}^{N-1}\frac{\omega_{i-k}^{q}}{\omega_i^q}.
		\end{align*}
		It follows from \eqref{brleqar} that
		\begin{align*}
			\frac{1}{N}\sum_{k=0}^{N-1}\|B^k x\|_{p,\omega}^{q}& \le b_q\sum_{i=1}^{N-1}\omega_i^p|x_{i}|^{p}+a_q\sum_{i=N}^{\infty}\omega_i^p|x_{i}|^{p} \le a_q,
		\end{align*}
		and this proves $(1)$ and $C_{q,\mathrm{ac}}\leq a_q$.
		
		Next, let $r>0$ and assume that $B$ is $r$-absolutely Ces\`aro bounded. Denote by $(e_{k})_{k\in\N}$ the canonical basis of $\ell^p(\N,\omega)$. Let $N\in \N$. By the definition of $r$-absolute  Ces\`aro boundedness, there exists $C>0$ such that
		\begin{align*}
			CN\|x\|_{p,\omega}^r \geq \sum_{k=0}^{N-1} \|B^kx\|^r_{p,\omega},\quad  x\in \ell^p(\N,\omega), \  N\in\N.
		\end{align*}
		In particular, taking $x=\dfrac{1}{\omega_i}e_i$ for $i \geq N$, we get
		$$CN \ge\sum_{k=0}^{N-1} \dfrac{1}{\omega^r_i}\|B^ke_i\|^r_{p,\omega}=\sum_{k=0}^{N-1} \Big(\dfrac{\omega_{i-k}}{\omega_i}\Big)^r.
		$$
		This inequality holds for every $N\geq 1$ and every $i\geq N$, which yields $a_r \leq C$. This proves $(2)$.
		
		Finally, $(3)$ follows from $(1)$ and $(2)$.
	\end{proof}
	
	The preceding theorem yields the following corollary.
	
	\begin{cor}\label{iifpACB}
		Let $q\ge p\ge1$ and let $B$ be the backward shift operator on $\ell^p(\N,\omega)$. Assume that there exists $K\geq1$ such that for every $N \in \N$ and every $k \in \{0,\ldots, N-1\}$,
		\begin{equation}\label{Assumpwk}
			\sup_{i\ge N} \frac{\omega_{i-k}}{\omega_{i}} \leq K\frac{\omega_{N-k}}{\omega_{N}}.
		\end{equation}
		Then $B$ is $q$-absolutely Ces\`aro bounded if and only if $c_q < + \infty$.
		Moreover, in that case,
		$$
		\forall n\in \mathbb{N}, \quad \frac{\omega_1}{\omega_{n+1}} \leq \|B^n\|_{p,\omega} \leq \frac{K\omega_1}{\omega_{n+1}}.
		$$
		In particular, if for every $n\in \mathbb{N}$, $\omega_n=f(n)$ where $f:[1,\infty)\to(0,\infty)$ is log-convex, then $\omega=(\omega_n)_n$ satisfies \eqref{Assumpwk} with $K=1$.
	\end{cor}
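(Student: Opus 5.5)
The plan is to reduce everything to Theorem~\ref{TheMainDisGen}(3), which says that for $q\ge p$, $B$ is $q$-ACB if and only if $a_q<\infty$. It therefore suffices to show that, under \eqref{Assumpwk}, $a_q<\infty$ if and only if $c_q<\infty$.

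For the direction $a_q<\infty\Rightarrow c_q<\infty$, take $i=N$ in the definition of $a_q$. Reindexing with $k\mapsto N-k$ gives
\[
\frac1N\sum_{k=0}^{N-1}\Big(\frac{\omega_{N-k}}{\omega_N}\Big)^q=\frac1N\sum_{j=1}^{N}\Big(\frac{\omega_j}{\omega_N}\Big)^q ,
\]
so $c_q\le a_q$. This step does not even need \eqref{Assumpwk}. Conversely, fix $N$ and $i\ge N$. Applying \eqref{Assumpwk} termwise for each $k\le N-1$ gives $(\omega_{i-k}/\omega_i)^q\le K^q(\omega_{N-k}/\omega_N)^q$. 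Summing and averaging yields $a_q\le K^q c_q$.

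For the norm estimate, recall $\|B^n\|_{p,\omega}=\sup_k \omega_k/\omega_{k+n}$. Taking $k=1$ gives the lower bound $\omega_1/\omega_{n+1}$. For the upper bound, write $i=k+n$ and apply \eqref{Assumpwk} with $N=n+1$ and shift $n\in\{0,\dots,N-1\}$. For every $i\ge n+1$ this gives
\[
\frac{\omega_{i-n}}{\omega_i}\le K\frac{\omega_{1}}{\omega_{n+1}}.
\]
Taking the supremum over $i$ gives the claim. This part is valid without the ACB assumption.

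For the log-convex case, set $g=\log f$, which is convex on $[1,\infty)$. The inequality $\omega_{i-k}/\omega_i\le \omega_{N-k}/\omega_N$ for $i\ge N$ is equivalent to
\[
g(i)-g(i-k)\ge g(N)-g(N-k).
\]
This is the standard monotonicity of difference quotients of convex functions: $x\mapsto g(x)-g(x-k)$ is nondecreasing for fixed $k\ge0$, provided $x-k\ge1$. That proviso holds since $N-k\ge1$ and $i\ge N$. So \eqref{Assumpwk} holds with $K=1$. The only point requiring care is this monotonicity step, which follows from the three-slope inequality applied to $x-k\le x\le y-k\le y$ or the interleaved ordering. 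One can prove it by noting that $g(x)-g(x-k)=\int_{x-k}^{x}g'_+$ with $g'_+$ nondecreasing. I expect no real obstacle; the main thing to check is the index bookkeeping when matching \eqref{Assumpwk} to the norm formula.
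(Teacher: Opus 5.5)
Your proof is correct and follows the paper's argument essentially step for step: $c_q\le a_q\le K^q c_q$ (via $i=N$ and via \eqref{Assumpwk} applied termwise) combined with Theorem~\ref{TheMainDisGen}(3), the norm bounds from $k=1$ and from \eqref{Assumpwk} with $N=n+1$ and shift $n$, and the log-convex case via monotonicity of difference quotients of $\log f$. One small caveat: your alternative identity $g(x)-g(x-k)=\int_{x-k}^{x}g'_+$ can fail when $x-k=1$, because a convex function on $[1,\infty)$ may jump upward at the endpoint, but the slope comparison you primarily rely on (compare the slope over $[N-k,N]$ with that over $[N-k,i]$, then with that over $[i-k,i]$) is valid in all cases.
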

	
	\begin{proof}
		
		By the definitions and \eqref{Assumpwk},
		$$c_q \leq a_q \leq K^q c_q,$$
		so that, according to Theorem \ref{TheMainDisGen}, $B$ is $q$-absolutely Ces\`aro bounded if and only if $c_q < \infty$.
		
		Moreover, for every $n\ge 1$,
		\begin{equation*}
			\frac{\omega_1}{\omega_{n+1}}\leq\|B^n\|_{p,\omega} = \sup_{i\in \N}\frac{\omega_{i}}{\omega_{i+n}} = \sup_{i>n}{\frac{\omega_{i-n}}{\omega_{i}}} \leq K \frac{\omega_1}{\omega_{n+1}}.
		\end{equation*}
		Assume now that $\omega_n = f(n)$ where $f$ is log-convex. Then for every $y \geq 1$, the function $x \mapsto \frac{\ln(f(x))-\ln(f(y))}{x-y}$ is increasing on $[1,\infty) \setminus \{y\}$. Hence, if $N \in \N$ and $k \in \{1,\ldots, N-1\}$, then for every $i\geq N$,
		$$
		\dfrac{\ln(f(N))-\ln(f(N-k))}{N-(N-k)} \leq \dfrac{\ln(f(i))-\ln(f(i-k))}{i-(i-k)}
		$$
		which gives
		$$
		\frac{\omega_N}{\omega_{N-k}} \leq \frac{\omega_i}{\omega_{i-k}}
		$$
		and yields \eqref{Assumpwk} with $K=1$.
	\end{proof}
	
	\begin{example}
		In general, the finiteness of $c_1$ does not imply the absolute Ces\`aro boundedness of $B$. Indeed, consider, for $0 < \alpha<1$ the following weight: set $\omega_1 = 1$ and for every $j\geq 1$ and $k\geq 2$ such that $10^{j-1} + 1 \leq k \leq 10^j$, let
		\[
		\omega_k = \left\{
		\begin{array}{cl}
			\dfrac{1}{k^{\alpha}}
			& \text{if } 10^{j-1} + 1 \leq k \leq 10^j - j, \\[2mm]
			\dfrac{(10^j-k+1)^{\alpha}}{10^{\alpha j}}
			& \text{if } 10^{j} - j + 1 \leq k \leq 10^j.
		\end{array}
		\right.
		\]
		In this example, we consider $B$ acting on $\ell^1(\N,\omega)$.
		
		Let us show first that $a_1 = \infty$. Indeed, let $j\geq 1$, $N=j$ and $i=10^j$. We have
		\begin{align*}
			\frac{1}{N} \sum_{k=0}^{N-1} \frac{\omega_{i-k}}{\omega_{i}}
			= \frac{1}{j} \sum_{k=0}^{j-1} \frac{\omega_{10^j-k}}{\omega_{10^j}}
			& = \frac{1}{j} \sum_{k=0}^{j-1} (k+1)^{\alpha} \underset{j \to + \infty}{\sim} \frac{1}{j} \frac{j^{\alpha+1}}{1+\alpha} = \frac{j^{\alpha}}{1+\alpha},
		\end{align*}
		so that the supremum over $j\geq 1$ is infinite and hence $a_1 = \infty$. Therefore $B$ is not absolutely Ces\`aro bounded.
		
		Next, let us show that $c_1 < \infty$. Let $N=10^j$ for some $j\geq 1$. We have
		\begin{align*}
			\sum_{k=1}^{10^j} \omega_{k}
			& \leq 1 + \sum_{k=2}^{10^j} \dfrac{1}{k^{\alpha}} + \sum_{l=1}^j \sum_{k=10^{l}-l+1}^{10^l} \dfrac{(10^l-k+1)^{\alpha}}{10^{\alpha l}} \\
			& \leq 1 + \frac{10^{(1-\alpha)j}}{1-\alpha} + \sum_{l=1}^j \sum_{k=10^{l}-l+1}^{10^l} \dfrac{l^{\alpha}}{10^{\alpha l}} \\
			& = 1 + \frac{10^{j}10^{-\alpha j}}{1-\alpha} + \sum_{l=1}^j \dfrac{l^{\alpha+1}}{10^{\alpha l}} \\
			& \leq 1 + \frac{10^{j}10^{-\alpha j}}{1-\alpha} + Mj,
		\end{align*}
		for some positive constant $M$, since $\left(\dfrac{l^{\alpha+1}}{10^{\alpha l}}\right)_{l\geq 1}$ is bounded (eventually decreasing).	We deduce that
		$$
		\sup_{j\geq 1} \frac{1}{10^j} \sum_{k=1}^{10^j} \frac{\omega_{k}}{\omega_{10^j}} \leq  \sup_{j\geq 1} \frac{10^{\alpha j}}{10^j} \left( 1 + \frac{10^{j}10^{-\alpha j}}{1-\alpha} + M j \right) < \infty
		$$
		since $\alpha < 1$. Now let $10^{j-1}<N\leq10^j$. If $N\leq10^j-j$, then $\omega_N=N^{-\alpha}$ and the same estimates give $\sum_{k=1}^N\omega_k\leq C N^{1-\alpha}$, hence
		\[
		\frac1N\sum_{k=1}^N\frac{\omega_k}{\omega_N}\leq C.
		\]
		If $10^j-j+1\leq N\leq10^j$, write $L=10^j-N+1\in\{1,\ldots,j\}$. Then $\omega_N=L^\alpha10^{-\alpha j}$ and $N\geq10^j-j+1$, while the preceding estimate gives $\sum_{k=1}^N\omega_k\leq C10^{(1-\alpha)j}$. Therefore
		\[
		\frac1N\sum_{k=1}^N\frac{\omega_k}{\omega_N}
		\leq C\frac{10^j}{N L^\alpha}\leq C',
		\]
		where the last constant is independent of $j$ and $N$. Thus $c_1<\infty$.
		
		Finally, let us prove that for every $n \in \mathbb{N}$, we have
		\[
		\|B^n\|_{1,\omega} = (1+n)^{\alpha}.
		\]
		Fix $n\in \N$ and recall that $\|B^n\|_{1,\omega} = \sup_{k\geq 1} \frac{\omega_k}{\omega_{k+n}}$. To prove the lower bound, we choose $j \geq n+1$. Consider $k = 10^j - n$. Since $n \leq j-1$, we have $10^j - j + 1 \leq k \leq 10^j$. As $k+n = 10^{j}$, the definition of the weight yields
		\[
		\omega_k = \frac{(10^j - k + 1)^\alpha}{10^{\alpha j}} = \frac{(n+1)^\alpha}{10^{\alpha j}} \quad \text{and} \quad \omega_{k+n} = 10^{-\alpha j},
		\]
		so that
		\[
		\|B^n\|_{1,\omega} \geq \frac{\omega_{10^j-n}}{\omega_{10^j}} = \frac{(n+1)^\alpha 10^{-\alpha j}}{10^{-\alpha j}} = (1+n)^\alpha.
		\]
		To establish the upper bound, we first observe that
		\begin{equation}\label{LowerBoundOmega_m}
			\omega_m \geq \frac{1}{m^{\alpha}}
		\end{equation}
		holds for all $m \geq 1$. We now estimate the ratio $\frac{\omega_k}{\omega_{k+n}}$ for any $k \geq 1$:
		\begin{itemize}
			\item If $10^j-j+1 \leq k \leq 10^j$ for some $j\in \mathbb{N}$, we set $L = 10^j-k+1 \in \{1, \dots, j\}$, so that $\omega_k = L^\alpha 10^{-\alpha j}$.
			\begin{itemize}
				\item If $k+n \leq 10^j$, then $10^j-j+1 \leq k+n \leq 10^j $, meaning $\omega_{k+n} = (L-n)^\alpha 10^{-\alpha j}$. Thus, $\frac{\omega_k}{\omega_{k+n}} = \left(\frac{L}{L-n}\right)^\alpha$. Since $L > n$, this is maximized at $L=n+1$, yielding
				\[
				\frac{\omega_k}{\omega_{k+n}} \leq (1+n)^\alpha.
				\]
				\item  If $k+n > 10^j$, then $n > 10^j - k = L-1$, which implies $n - L + 1 \geq 1$. By \eqref{LowerBoundOmega_m}, we have $\omega_{k+n} \geq (k+n)^{-\alpha}$, and noting that $k+n = 10^j + n - L + 1$, we obtain
				\[
				\frac{\omega_k}{\omega_{k+n}} = \frac{L^{\alpha}}{\omega_{k+n}10^{\alpha j}}\leq \left(\frac{L(k+n)}{10^j} \right)^{\alpha} =\left(\frac{L(10^j + n - L + 1)}{10^j}\right)^\alpha.
				\]
				Since $L \leq 10^j$ and $n - L + 1 \geq 1$, we have $L(n - L + 1) \leq 10^j(n - L + 1)$, which yields
				\begin{align*}
					L(10^j + n - L + 1) &= L 10^j + L(n - L + 1) \\
					&\le L 10^j + 10^j(n - L + 1)\\
					& = 10^j(1+n).
				\end{align*}
				Hence, we get \[\frac{\omega_k}{\omega_{k+n}} \leq (1+n)^\alpha.\]
			\end{itemize}
			\item Otherwise, $\omega_k = k^{-\alpha}$. Using \eqref{LowerBoundOmega_m}, we immediately obtain
			\[
			\frac{\omega_k}{\omega_{k+n}} \leq \frac{k^{-\alpha}}{(k+n)^{-\alpha}} = \left(1 + \frac{n}{k}\right)^\alpha \leq (1+n)^\alpha.
			\]
		\end{itemize}
		Taking the supremum over $k \geq 1$, we conclude that $\|B^n\|_{1,\omega} \leq (1+n)^{\alpha}$.
		
	\end{example}
	
	\subsection{Equivalence between Ces\`aro boundedness properties}
	
	\begin{prop}\label{iifpACB2}
		Let $1\leq p<\infty$ and let $B$ be the backward shift operator on $\ell^p(\N,\omega)$. Assume that $\omega$ is equivalent to a nonincreasing weight. Then the following assertions are equivalent:
		\begin{enumerate}
			\item $B$ is $p$-absolutely Ces\`aro bounded.
			\item $B$ is Ces\`aro bounded.
		\end{enumerate}
	\end{prop}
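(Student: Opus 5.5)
The implication (1)$\Rightarrow$(2) is formal. Since $p\geq 1$, Proposition~\ref{qimpliesp} shows that a $p$-ACB operator is ACB. ACB implies SCB, which implies Ces\`aro boundedness, by the chain of implications recalled in the introduction. So the real content is (2)$\Rightarrow$(1). By Lemma~\ref{equivweightsimpliesequivCesaro}, both properties are invariant under passing to an equivalent weight, so I will assume from the start that $\omega$ itself is nonincreasing. By Theorem~\ref{TheMainDisGen}(1) with $q=p$, it then suffices to show $a_p<\infty$, that is,
\[
\sum_{k=0}^{N-1}\Big(\frac{\omega_{i-k}}{\omega_i}\Big)^p\leq C' N\qquad\text{for all } N\geq1,\ i\geq N .
\]

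Let $C:=\sup_n\|M_n(B)\|$. The naive test vector $e_i/\omega_i$ only gives $\sum_k(\omega_{i-k}/\omega_i)^p\leq C^pN^p$, which is too weak. Instead, I will test $M_{2N}(B)$ on a flat block
\[
x:=\sum_{m=i}^{i+2N-1}e_m .
\]
Because $\omega$ is nonincreasing, $\|x\|_{p,\omega}^p=\sum_{m=i}^{i+2N-1}\omega_m^p\leq 2N\,\omega_i^p$. This is exactly where monotonicity enters.

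On the other side, for $j\geq1$ the $j$-th coordinate of $M_{2N}(B)x$ is $\frac{1}{2N}\#\{0\leq k\leq 2N-1:\ i\leq j+k\leq i+2N-1\}$. For $j\in\{i-N+1,\dots,i\}$ (all $\geq1$ since $i\geq N$), the admissible $k$ run from $i-j$ to $2N-1$. Their number is $2N-(i-j)\geq N+1$, so the coordinate is at least $1/2$. Hence
\[
2^{-p}\sum_{k=0}^{N-1}\omega_{i-k}^p\leq\|M_{2N}(B)x\|_{p,\omega}^p\leq C^p\,2N\,\omega_i^p ,
\]
which gives $a_p\leq 2^{p+1}C^p<\infty$, and Theorem~\ref{TheMainDisGen} concludes.

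The main obstacle is the choice of test vector. Unit vectors lose a factor $N^{p-1}$, while averaging over a block of length comparable to $N$ recovers the sharp linear bound, at the cost of controlling $\|x\|$ by $\omega_i$. That control is precisely what the nonincreasing (up to equivalence) hypothesis provides. The index bookkeeping needs some care: using $M_{2N}$ rather than $M_N$ keeps every coordinate $j\in[i-N+1,i]$ covered at least $N$ times, and the condition $i\geq N$ keeps these indices inside $\N$.
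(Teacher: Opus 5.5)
Your proof is correct and follows essentially the same route as the paper: reduce to a nonincreasing weight, test $M_{2N}(B)$ on a block vector supported near $i$, and count that each coordinate $j\in\{i-N+1,\dots,i\}$ receives at least $N$ contributions, which gives $a_p<\infty$. The only difference is cosmetic. The paper uses the normalized block $N^{-1/p}\sum_{k=i}^{i+N-1}e_k/\omega_k$ and applies monotonicity to bound the output coefficients from below via $1/\omega_k\geq 1/\omega_i$, obtaining $a_p\leq 2^pC^p$. You use a flat block of length $2N$ and apply monotonicity to bound $\|x\|_{p,\omega}$ from above, obtaining $a_p\leq 2^{p+1}C^p$.
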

	
	\begin{proof}
		$(1) \implies (2)$ follows from the fact that if $B$ is $p$-absolutely Ces\`aro bounded, then it is absolutely Ces\`aro bounded by Proposition \ref{qimpliesp}, and this implies that $B$ is Ces\`aro bounded.\\
		$(2) \implies (1)$. By Lemma \ref{equivweightsimpliesequivCesaro}, we can assume that the weight $\omega$ is nonincreasing. Assume that $B$ is Ces\`aro bounded on $\ell^p(\N,\omega)$. Let $i,N \in \N$ with $i\geq N$ and define $\displaystyle x_{N,i} = \frac{1}{N^{1/p}}\sum_{k=i}^{N+i-1}\frac{e_k}{\omega_k}$. Then $\|x_{N,i}\|_{p,\omega} = 1$ and applying the definition of Ces\`aro boundedness yields the existence of $C>0$ such that
		$$
		C^p \geq \frac{1}{(2N)^p} \norm{\sum_{j=0}^{2N-1}B^jx_{N,i}}^p_{p,\omega} = \frac{1}{2^pN^{p+1}} \norm{\sum_{j=0}^{2N-1} \left(\sum_{k=i}^{N+i-1} \frac{e_{k-j}}{\omega_k}\right)}^p_{p,\omega},
		$$
		where $e_s=0$ whenever $s \leq 0$. Using the fact that $\omega$ is nonincreasing, we get
		\begin{align*}
			C^p
			& \geq \frac{1}{2^pN^{p+1}} \frac{1}{\omega_i^p} \norm{\sum_{j=0}^{2N-1} \left(\sum_{k=i}^{N+i-1} e_{k-j} \right)}^p_{p,\omega} \\
			& = \frac{1}{2^pN^{p+1}} \frac{1}{\omega_i^p} \norm{\sum_{j=0}^{2N-1} \left(\sum_{l=j+1-N}^{j} e_{i-l} \right)}^p_{p,\omega} \quad (l = i+j-k) \\
			& = \frac{1}{2^pN^{p+1}} \frac{1}{\omega_i^p} \norm{\sum_{l=1-N}^{2N-1} e_{i-l} \left(\sum_{j=\max(0,l)}^{\min(2N-1,l+N-1)} 1 \right)}^p_{p,\omega} \\
			& \geq \frac{1}{2^pN^{p+1}} \frac{1}{\omega_i^p} \norm{\sum_{l=0}^{N-1} e_{i-l} \left(\sum_{j=l}^{l+N-1} 1 \right)}^p_{p,\omega} \\
			& = \frac{1}{2^pN} \sum_{l=0}^{N-1} \left(\frac{\omega_{i-l}}{\omega_i}\right)^p
		\end{align*}
		Taking the supremum over $N\geq 1$ and $i\geq N$ gives $a_p \leq 2^p C^p$, so that $B$ is $p$-absolutely Ces\`aro bounded by Theorem \ref{TheMainDisGen}.
	\end{proof}
	
	\begin{example}
		Let $1<p<\infty$ and fix $0<\alpha<p-1$. Let
		\[
		E:=\{2^j:j\geq 0\},
		\qquad
		d(n):=\operatorname{dist}(n,E).
		\]
		Define the weight $(\omega_n)_n$ by
		\[
		\omega_n^p := v_n =(1+d(n))^\alpha.
		\]
		We will show that $B$ is Ces\`aro bounded on $\ell^p(\N,\omega)$ but not $p$-ACB. We first show the Ces\`aro boundedness. To do so, note that the Ces\`aro means of the backward shift $B$ are given by
		\[
		(M_N(B)x)_n
		= \frac{1}{N}\sum_{k=0}^{N-1}x_{n+k}.
		\]
		Pointwise, we have
		\[
		|(M_N(B)x)_n|
		\leq (M^+x)_n,
		\]
		where $M^+$ is the maximal operator defined by
		\[
		(M^+x)_n = \sup_{m\geq 1} \frac{1}{m}\sum_{k=0}^{m-1}|x_{n+k}|.
		\]
		Hence, it suffices to show that $M^+$ is bounded, that is, there exists $C\geq 0$ such that, for every $x\in \ell^p(\N,\omega)$,
		$$
		\|M^+x\|_{p,\omega} \leq C \|x\|_{p,\omega}.
		$$
		To do so, we will prove that $(v_n)_n$ satisfies
		\begin{equation}\label{estimucken}
			\sup_I
			\left(
			\frac{1}{|I|}\sum_{n\in I}v_n
			\right)
			\left(
			\frac{1}{|I|}
			\sum_{n\in I}v_n^{-1/(p-1)}
			\right)^{p-1}
			<\infty,
		\end{equation}
		where the supremum is taken over all finite intervals
		$I\subset\mathbb{N}$. This is the discrete $A_p$ condition and, in particular, implies the corresponding one-sided $A_p$ condition. The one-sided weighted maximal theorem from \cite{MartinReyes}, applied to the piecewise constant extensions of the sequence and the weight, then yields the boundedness of $M^+$.
		
		Set
		\[
		\beta:=\frac{\alpha}{p-1}\in(0,1).
		\]
		Notice that
		\[
		v_n^{-1/(p-1)}
		=(1+d(n))^{-\beta}.
		\]
		We need the following lemma.
		
		\begin{lemma}\label{estimasumdya}
			There exists a constant $C_\beta>0$ such that, for every finite
			interval $I\subset\mathbb{N}$,
			\begin{equation*}
				\sum_{n\in I}(1+d(n))^{-\beta}
				\leq C_\beta |I|^{1-\beta}.
			\end{equation*}
		\end{lemma}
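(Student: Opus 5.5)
The plan is to exploit the sparseness of $E=\{2^j\}$: in any interval $I$, only a few points of $E$ lie nearby relative to $|I|$. Each $n\in I$ is then charged to its nearest point of $E$.

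\emph{Step 1 (charging to $E$).} For each $n\in I$ choose $e(n)\in E$ with $|n-e(n)|=d(n)$. Group the terms of the sum according to the value $e\in E$. For fixed $e$, the points $n\in I$ with $e(n)=e$ satisfy $d(n)=|n-e|$, so each value $t\ge 0$ of $d(n)$ occurs at most twice. These points also lie in $I$, so they form at most $|I|$ integers. Since $t\mapsto(1+t)^{-\beta}$ is decreasing, the contribution of $e$ is at most
\[
2\sum_{t=0}^{|I|-1}(1+t)^{-\beta}\le \frac{2}{1-\beta}\,|I|^{1-\beta},
\]
using $\beta<1$.

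\emph{Step 2 (counting the relevant points of $E$).} This is the main point. Only those $e\in E$ that are nearest points of some $n\in I$ contribute. Write $I=[a,b]$ with $b-a+1=|I|$. I claim that the number of such $e$ is bounded by an absolute constant $K$, say $K\le 4$. The nearest-point map $n\mapsto e(n)$ is monotone, so the contributing $e$'s are consecutive powers of $2$. Let $2^{j}<2^{j+1}<\dots<2^{j+m}$ be these powers. Every power strictly between the smallest and the largest must lie inside $I$, or at least within $I$'s span. Indeed, if $e(n)=2^{j+1}$ then $n$ is closer to $2^{j+1}$ than to $2^j$ and to $2^{j+2}$, so $n\in(3\cdot 2^{j-1},3\cdot2^{j}]$. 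Hence if $m\ge 3$, the interval $I$ contains points near $2^{j+1}$ and near $2^{j+m-1}$, which gives $|I|\gtrsim 2^{j+m-1}$.

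This is exactly where Step 1 is too crude. So in that case I would refine the bound: split the contributing $e$'s into the largest two, each bounded by Step 1, and the rest $2^{i}$ with $i\le j+m-2$. For these, the set of $n$ charged to $2^i$ has size $\lesssim 2^{i}$. Step 1 with $|I|$ replaced by $\min(|I|,2^{i})$ then bounds the contribution by $C2^{i(1-\beta)}$. Summing this geometric series over $i\le j+m-2$ gives $C2^{(j+m-2)(1-\beta)}\le C'|I|^{1-\beta}$.

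\emph{Step 3 (conclusion).} Adding up the finitely many large contributions from Step 1 and the geometric tail from Step 2 gives $\sum_{n\in I}(1+d(n))^{-\beta}\le C_\beta|I|^{1-\beta}$, with $C_\beta$ depending only on $\beta$. The main obstacle is the bookkeeping in Step 2. The cleanest route is probably to bound uniformly, for every $e=2^i$, the contribution by $C\min(|I|,2^i)^{1-\beta}$, and then to note that $2^i\le 2|I|$ for all but at most two of the contributing $i$. Only those two use the $|I|$ bound, and the remaining terms sum geometrically.
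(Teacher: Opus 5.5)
Your argument is correct and is essentially the paper's proof. The paper partitions $\mathbb{N}$ into the dyadic blocks $[2^r,2^{r+1})$ instead of your nearest-point cells around $2^i$, but it uses the same two ingredients. First, each value of $d$ occurs at most twice on a piece, so each piece contributes $\lesssim \min(|I|,2^r)^{1-\beta}$. Second, when $I$ meets many pieces, an intermediate piece lies entirely inside $I$, so the geometric sum is $\lesssim |I|^{1-\beta}$.

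Delete the claim at the start of Step 2 that the number of contributing powers of $2$ is bounded by an absolute constant. It is false: for example, $I=[1,2^J]$ meets $J+1$ cells. Your refinement, which is what actually carries the proof, does not need it. Also fix a tie-breaking rule (say, take the smaller nearest point) so that $n\mapsto e(n)$ is genuinely monotone, since you use this implicitly.
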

		
		\begin{proof}
			For each $j\geq 0$, let
			\[
			J_j=[2^j,2^{j+1})\cap\mathbb{N}.
			\]
			We first consider an interval $K\subset J_j$ of cardinality
			$m=|K|$. Since the function
			$t\mapsto (1+t)^{-\beta}$ is decreasing, the largest possible
			value of the sum is obtained when $K$ is located next to one of the
			endpoints of $J_j$. Hence
			\begin{equation}\label{estdyad}
				\sum_{n\in K}(1+d(n))^{-\beta}
				\leq
				2\sum_{k=0}^{m-1}(1+k)^{-\beta} \leq C_\beta m^{1-\beta} = C_\beta |K|^{1-\beta}.
			\end{equation}
			We now consider an arbitrary finite interval $I\subset\mathbb{N}$.
			Let
			\[
			I_r=I\cap J_r,
			\qquad
			m_r=|I_r|.
			\]
			Only finitely many of the $I_r$ are nonempty, say $I_{r_1}, \ldots, I_{r_k}$, and
			\[
			|I|=\sum_{i=1}^k m_{r_i}.
			\]
			By \eqref{estdyad},
			\begin{equation*}
				\sum_{n\in I}(1+d(n))^{-\beta}
				\leq
				C_\beta\sum_{i=1}^k m_{r_i}^{1-\beta}.
			\end{equation*}
			If $k\leq2$, concavity gives
			\[
			\sum_{i=1}^k m_{r_i}^{1-\beta}\leq 2^\beta |I|^{1-\beta}.
			\]
			If $k\geq3$, the penultimate dyadic block is entirely contained in $I$; since the indices $r_i$ are consecutive, $r_{k-1}=r_k-1$ and
			$I_{r_{k-1}}=[2^{r_k-1},2^{r_k})$. Hence $|I|\geq2^{r_k-1}$, and
			\begin{align*}
				\sum_{i=1}^k m_{r_i}^{1-\beta}
				\leq \sum_{i=1}^k 2^{r_i(1-\beta)}
				\leq \sum_{i=0}^{r_k} 2^{i(1-\beta)}
				\leq \frac{2^{2(1-\beta)}}{2^{1-\beta}-1} (2^{r_k-1})^{1-\beta} \leq C_{\beta}'|I|^{1-\beta},
			\end{align*}
			whence the result.
		\end{proof}
		
		\noindent We now prove \eqref{estimucken}. Let $I$ be an interval of cardinality $m=|I|$, and set
		\[
		D:=\max_{n\in I}(1+d(n)).
		\]
		We distinguish two cases.
		
		\begin{itemize}
			\item Case 1: $D\leq 2m$. In this case,
			\[
			\frac{1}{m}\sum_{n\in I}v_n
			=\frac{1}{m}\sum_{n\in I}(1+d(n))^\alpha
			\leq D^\alpha
			\leq (2m)^\alpha.
			\]
			By Lemma \ref{estimasumdya}, we also have
			\[
			\frac{1}{m}\sum_{n\in I}
			v_n^{-1/(p-1)}
			\leq C_\beta m^{-\beta}.
			\]
			Consequently,
			\[
			\left(
			\frac{1}{m}\sum_{n\in I}v_n
			\right)
			\left(
			\frac{1}{m}\sum_{n\in I}
			v_n^{-1/(p-1)}
			\right)^{p-1}
			\leq
			(2m)^\alpha
			C_\beta^{p-1}m^{-\beta(p-1)}
			=
			2^\alpha C_\beta^{p-1},
			\]
			since $\beta(p-1)=\alpha$.
			\item Case 2: $D>2m$. Choose $n_0\in I$ such that
			\[
			1+d(n_0)=D.
			\]
			Since the function $d$ is $1$-Lipschitz, we have, for every $n\in I$,
			\[
			1+d(n)
			\geq
			(1+d(n_0))-|n-n_0|
			\geq D-m
			\geq \frac{D}{2}.
			\]
			Consequently,
			\[
			\frac{1}{m}\sum_{n\in I}v_n
			\leq D^\alpha
			\]
			and
			\[
			\frac{1}{m}\sum_{n\in I}
			v_n^{-1/(p-1)}
			\leq
			\left(\frac{D}{2}\right)^{-\beta}.
			\]
			It follows that
			\[
			\left(
			\frac{1}{m}\sum_{n\in I}v_n
			\right)
			\left(
			\frac{1}{m}\sum_{n\in I}
			v_n^{-1/(p-1)}
			\right)^{p-1}
			\leq
			D^\alpha
			\left(\frac{D}{2}\right)^{-\beta(p-1)}
			=2^\alpha.
			\]
		\end{itemize}
		
		\noindent We proved that \eqref{estimucken} is satisfied. Hence $B$ is Ces\`aro bounded. We now show that $B$ is not $p$-ACB. Fix $j\geq3$ and set
		\[
		n_j:=2^j,
		\qquad
		N_j:=2^{j-2}.
		\]
		For $0\leq k\leq N_j-1$, we have
		\[
		d(2^j-k)=k.
		\]
		Since $v_{2^j}=1$, we obtain
		\[
		\left(
		\frac{\omega_{2^j-k}}{\omega_{2^j}}
		\right)^p
		= (1+k)^\alpha.
		\]
		Consequently, the quantity $a_p$ satisfies
		\begin{equation*}
			a_p
			\geq
			\frac{1}{N_j}
			\sum_{k=0}^{N_j-1}(1+k)^\alpha
			\sim
			\frac{1}{\alpha+1}N_j^\alpha.
		\end{equation*}
		Since $N_j\to\infty$ as $j\to\infty$, we conclude that
		\[
		a_p=\infty.
		\]
		It follows from Theorem \ref{TheMainDisGen} that $B$ is not $p$-absolutely Ces\`aro bounded.
	\end{example}
	
	\subsection{Examples of \texorpdfstring{$p$}{p}-absolutely Ces\`aro bounded backward shifts}
	
	The following example is used in several papers, either to produce examples or counterexamples. We give a characterization of its $r$-absolute Ces\`aro boundedness on $\ell^p$ for every value of $r\geq 1$ and $p\geq 1$.
	
	\begin{example}\label{mainexample}
		Let $\alpha>0$, $r\geq 1$ and $(\omega_i)_{i\ge1} = \big(i^{-\alpha}\big)_{i\ge1}$. Let $B$ be the backward shift on $\ell^p(\N,\omega)$. Then, for every $n\geq 1$, $\|B^n\|_{p,\omega}=(n+1)^\alpha$ and
		$$
		B \ \text{is} \ r\text{-absolutely Ces\`aro bounded} \Longleftrightarrow r\alpha<1 \ \text{ and } \ p\alpha<1.
		$$
	\end{example}
	\begin{proof}
		The weight satisfies the assumptions of Corollary~\ref{iifpACB} with $K=1$. In particular, $\|B^n\|_{p,\omega}=(n+1)^\alpha$. Assume first that $p\leq r<1/\alpha$. For every $N\geq1$,
		\[
		\frac1N\sum_{k=1}^N
		\Bigl(\frac{\omega_k}{\omega_N}\Bigr)^r
		=
		\frac{N^{\alpha r}}{N}\sum_{k=1}^N\frac1{k^{\alpha r}}
		\longrightarrow \frac1{1-\alpha r}.
		\]
		Hence $c_r<\infty$, and Corollary~\ref{iifpACB} implies that $B$ is $r$-absolutely Ces\`aro bounded. If instead $r\leq p<1/\alpha$, the same computation with $p$ in place of $r$ gives $c_p<\infty$. Thus $B$ is $p$-absolutely Ces\`aro bounded and hence, by Proposition~\ref{qimpliesp}, it is $r$-absolutely Ces\`aro bounded. This proves $(\Leftarrow)$.
		
		Conversely, assume that $B$ is $r$-absolutely Ces\`aro bounded. By Proposition~\ref{normpACB}, $\|B^n\|_{p,\omega}=o(n^{1/r})$, and therefore $\alpha<1/r$. It remains to show that $p\alpha<1$. If $p\alpha\geq1$, then
		\[
		\frac{N^{\alpha p}}{N}\sum_{k=1}^N\frac1{k^{\alpha p}}
		\longrightarrow \infty,
		\]
		so $c_p=\infty$. By Corollary~\ref{iifpACB}, $B$ is not $p$-absolutely Ces\`aro bounded; Proposition~\ref{iifpACB2} then shows that $B$ is not Ces\`aro bounded. In particular, $B$ is not $1$-absolutely Ces\`aro bounded and therefore cannot be $r$-absolutely Ces\`aro bounded. This contradiction proves $p\alpha<1$.
	\end{proof}
	
	\begin{remark}
		In \cite{Hou}, the authors showed that for the weight defined by $\omega_1=1$ and $\omega_n = \frac{(2n-3)!!}{(2n-2)!!}$, $B$ is absolutely Ces\`aro bounded on $\ell^1(\N,\omega)$. Note that the weight $\omega$ is equivalent to $\omega' = (1/k^{1/2})$. Hence, Example \ref{mainexample} and Lemma \ref{equivweightsimpliesequivCesaro} recover this result.
	\end{remark}
	
	\begin{example}\label{ExDisAbsCesp}
		In the following examples, all weights satisfy the log-convexity assumption in Corollary \ref{iifpACB}. Let $1\leq p<\infty$ be fixed. We consider the operator $B$ acting on $\ell^p(\N,\omega)$.
		\begin{enumerate}
			\item Let $\kappa>0$ and take $(\omega_i)_{i\in \N} = \big(\log(i+2)^{-\kappa}\big)_{i\in \N}$.
			For every $N\geq 1$ and $q\geq 1$,
			\begin{equation*}
				\frac{1}{N}\sum_{k=1}^{N}\Bigl(\frac{\omega_{k}}{\omega_N}\Bigr)^q
				=\frac{1}{N}\sum_{k=1}^{N}
				\frac{\log(N+2)^{\kappa q}}{\log(k+2)^{\kappa q}}
				\leq C_{\kappa,q},
			\end{equation*}
			where the last inequality follows from a standard integral estimate. We conclude that for every $q\ge p$, $B$ is $q$-absolutely Ces\`aro bounded on $\ell^p(\N,\omega)$, and hence $q$-absolutely Ces\`aro bounded for every $q\geq 1$. Moreover,
			$$
			\|B^n\|_{p,\omega} = \frac{\omega_1}{\omega_{n+1}} = \text{const} \cdot \log(n+3)^{\kappa}, \quad n \ge 1.
			$$
			\item Let $\beta \in (0,1)$ and define  $(\omega_i)_{i\in \N} = \Big( \exp(-(\log (i))^{\beta}) \Big)_{i\in \N}$. The same integral comparison gives $c_q<\infty$ for every $q\geq1$, and hence $B$ is $q$-absolutely Ces\`aro bounded for every $q\geq 1$ and we have
			$$
			\|B^n\|_{p,\omega} = \frac{\omega_1}{\omega_{n+1}} =  \text{const} \cdot \exp((\log (n+1))^{\beta}), \quad n \ge 1.
			$$
			\item Let $\kappa>0$ and $\varepsilon \in (0,1)$. We consider $(\omega_i)_{i\in \N} = \Big(\log(i+2)^{-\kappa}\,i^{-\frac{1-\varepsilon}{p}}\Big)_{i\in \N}$. A standard integral estimate gives $c_p<\infty$, so Corollary \ref{iifpACB} and Proposition \ref{qimpliesp} show that $B$ is $q$-absolutely Ces\`aro bounded for every $q\leq p$.
		\end{enumerate}
	\end{example}
	
	\begin{remark}
		By Proposition \ref{normpACB}, if $B$ is $q$-absolutely Ces\`aro bounded for every $q\geq 1$, then $\|B^n\|_{p,\omega} = o\left(n^{\varepsilon}\right)$ for every $\varepsilon>0$. In Example \ref{ExDisAbsCesp} (2), we found such an example for which $\|B^n\|_{p,\omega}$ grows faster than any power of $\log(n+1)$.
	\end{remark}
	
	\begin{example}
		\label{ex:adj}
		For every $r\geq 1$, there exists an $r$-SCB operator that is not $r$-ACB.
		
		\noindent Indeed, let $1<p<\infty$, and let $q$ be the conjugate exponent of $p$. Let $\alpha>0$ satisfy
		\[
		r\alpha<1
		\quad\text{and}\quad
		p\alpha<1.
		\]
		Let $B$ be the backward shift on $\ell^p(\mathbb N,\omega)$ given in Example~\ref{mainexample}. By Example~\ref{mainexample}, the operator $B$ is $r$-ACB. In particular, $B$ is $r$-SCB. Since $B$ is not power bounded, it follows from \cite[Proposition 2.1]{Cuny1} that its adjoint  $B^\ast$, acting on $\ell^q(\mathbb N,1/\overline{\omega})$, is not $r$-ACB. On the other hand, since $\ell^p$ is reflexive, the definition of $r$-SCB shows that this property is preserved under taking adjoints; hence $B^\ast$ is $r$-SCB.
	\end{example}
	
	\subsection{\texorpdfstring{$p$}{p}-Ces\`aro ratio-bounded operators}
	
	\begin{prop} \label{p-CRB}
		Let $X$ be a Banach space, let $T\in \mathcal{L}(X)$ be a non-nilpotent operator, and let $0< p<\infty$. If $T$ is $p$-absolutely Ces\`aro bounded, then
		$$\underset{n\geq 1}{\sup}\, \dfrac{1}{n}\sum_{k=0}^{n-1}\dfrac{\|T^{n-1}\|^p}{\|T^k\|^p} \leq C_{p,\mathrm{ac}}(T).$$
		
	\end{prop}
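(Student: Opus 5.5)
The plan is to apply the $p$-ACB inequality to vectors of the form $x=T^{n-1-k}$-preimages, or more precisely to use submultiplicativity of the powers. Fix $n\geq 1$ and $0\le k\le n-1$. For any $x\in X$,
\[
\|T^{n-1}x\|=\|T^{k}\,T^{n-1-k}x\|\le \|T^k\|\,\|T^{n-1-k}x\|,
\]
so that $\|T^{n-1}x\|^p/\|T^k\|^p\le \|T^{n-1-k}x\|^p$. Since $T$ is non-nilpotent, $\|T^k\|>0$ for every $k\ge 0$, so these quotients make sense. This is the key pointwise inequality; it converts the ratio of operator norms into orbit norms of a single vector.

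Summing over $k=0,\dots,n-1$ and reindexing $j=n-1-k$ gives
\[
\frac1n\sum_{k=0}^{n-1}\frac{\|T^{n-1}x\|^p}{\|T^k\|^p}\le \frac1n\sum_{j=0}^{n-1}\|T^{j}x\|^p\le C_{p,\mathrm{ac}}(T)\,\|x\|^p,
\]
by the definition of $p$-absolute Ces\`aro boundedness. Now take the supremum over $x$ with $\|x\|\le 1$: the left-hand side equals $\|T^{n-1}x\|^p$ times the fixed scalar $\frac1n\sum_k\|T^k\|^{-p}$, so the supremum of $\|T^{n-1}x\|^p$ over the unit ball is $\|T^{n-1}\|^p$. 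Hence
\[
\frac1n\sum_{k=0}^{n-1}\frac{\|T^{n-1}\|^p}{\|T^k\|^p}\le C_{p,\mathrm{ac}}(T),
\]
and taking the supremum over $n\ge1$ concludes.

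There is no genuine obstacle; the only point needing care is that the scalar factor does not depend on $x$, which allows the supremum over $x$ to be moved onto $\|T^{n-1}x\|$ alone, and the use of non-nilpotency to guarantee the denominators are nonzero.
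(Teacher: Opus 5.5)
Your proposal is correct and follows the paper's proof essentially verbatim: the submultiplicativity bound $\|T^{n-1}x\|\le\|T^k\|\,\|T^{n-1-k}x\|$, summation with reindexing to invoke the $p$-ACB estimate, and then a supremum over unit vectors $x$. Your remark that non-nilpotency is exactly what makes the denominators $\|T^k\|$ nonzero is a small point the paper leaves implicit.
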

	
	\begin{proof}
		Let $x\in X$ with $\|x\|=1$ and let $n\in \mathbb{N}$. For every $k=0,\ldots, n-1$, we have $\|T^{n-1}x\| \leq \|T^k\|\|T^{n-1-k}x\|$ so
		$$\|T^{n-1-k}x\|\geq \dfrac{\|T^{n-1}x\|}{\|T^k\|}.$$
		Thus,
		$$
		\frac{1}{n} \sum_{k=0}^{n-1} \dfrac{\|T^{n-1}x\|^p}{\|T^k\|^p} \leq \frac{1}{n} \sum_{k=0}^{n-1} \|T^{n-1-k}x\|^p \leq C_{p,\mathrm{ac}}(T).
		$$
		The result follows by taking the supremum over such $x$.
	\end{proof}
	
	\begin{df}
		\label{df:Ces-rat-bd}
		Let $X$ be a Banach space. A non-nilpotent operator $T\in\mathcal{L}(X)$ is said to be $p$-Ces\`aro ratio-bounded if
		$$\underset{n\ge1}{\sup}\, \dfrac{1}{n}\sum_{k=0}^{n-1}\dfrac{\|T^{n-1}\|^p}{\|T^k\|^p}<\infty.$$
		When $p = 1$, we simply say that $T$ is  Ces\`aro ratio-bounded.
	\end{df}
	
	\begin{remark}
		Let $X$ be a Banach space, let $T\in \mathcal{L}(X)$ be a non-nilpotent operator, and let $0< p<\infty$. If $T$ is $p$-Ces\`aro ratio-bounded, then
		$$\|T^n\|=\mathcal{O}(n^{1/p-\varepsilon}),$$
		for some $0<\varepsilon \leq 1/p$. Indeed, there exists  $C>0$ such that $pC \geq \max\{1,p\}$ and
		$$\|T^{N}\|^p \leq \frac{(N+1)C}{\sum_{k=0}^{N}\|T^{k}\|^{-p}},\quad  \forall N\in\N.$$
		Now, by  \cite[Lemma 3.2]{Cuny1},
		$$\|T^N\|\leq C^{1/p} 2^{1/pC} (N+1)^{1/p-1/pC}, \quad \forall N\in\N.$$
		Hence $\|T^N\|=\mathcal{O}(N^{1/p-\varepsilon})$, for $\varepsilon=\frac{1}{pC}$.
	\end{remark}
	
	By Proposition~\ref{p-CRB}, every $p$-ACB operator is $p$-Ces\`aro ratio-bounded. However, the converse does not hold in general. Indeed, an operator is $p$-Ces\`aro ratio-bounded if and only if its adjoint is $p$-Ces\`aro ratio-bounded. Therefore, Example~\ref{ex:adj} yields, for every $p\geq1$, a $p$-Ces\`aro ratio-bounded operator which is not $p$-ACB.
	
	\section{Absolute Ces\`aro boundedness and dynamics} \label{dynamics}
	
	We now turn to dynamical obstructions to absolute Ces\`aro boundedness.
	
	\begin{thm}
		\label{thm:NACB}
		Let $X$ be a Banach space and let $T\in\mathcal{L}(X)$. Assume that there exists a sequence $(y_n)_{n\geq 1}$ in $X$ such that
		\begin{enumerate}
			\item $\underset{\varepsilon_k=\pm 1}{\sup}
			\left\|
			\sum_{k=n}^{2n}\varepsilon_k y_k
			\right\| \underset{n\to\infty}{\longrightarrow}  0$;
			\item There exists $n_0\geq 1$ such that $\underset{n\ge n_0}{\inf}\|T^n y_n\|>0$.
		\end{enumerate}
		Then $T$ is not absolutely Ces\`aro bounded.
	\end{thm}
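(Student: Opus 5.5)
The plan is to argue by contradiction: test the ACB inequality on random-sign combinations $x_\varepsilon:=\sum_{k=n}^{2n}\varepsilon_k y_k$ and average over the signs. Hypothesis~(1) makes these vectors uniformly small, while hypothesis~(2) together with a symmetrization argument forces their orbits to stay large on average over a window of length comparable to $n$.

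Assume $T$ is ACB with constant $C=C_{1,\mathrm{ac}}(T)$. Set
\[
s_n:=\sup_{\varepsilon_k=\pm1}\Bigl\|\sum_{k=n}^{2n}\varepsilon_k y_k\Bigr\| \longrightarrow 0,
\qquad
\delta:=\inf_{n\ge n_0}\|T^n y_n\|>0.
\]
Fix $n\ge n_0$ and apply the ACB estimate with $N=2n+1$. For every choice of signs $\varepsilon=(\varepsilon_k)_{k=n}^{2n}$,
\[
\sum_{j=0}^{2n}\|T^j x_\varepsilon\|\le C(2n+1)\|x_\varepsilon\|\le C(2n+1)s_n .
\]
Averaging this over all $2^{n+1}$ sign choices, with $\mathbb{E}$ denoting the uniform average over $\varepsilon$, gives
\[
\sum_{j=0}^{2n}\mathbb{E}\|T^j x_\varepsilon\|\le C(2n+1)s_n .
\]

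The key step is a lower bound for each term with $n\le j\le 2n$. Write $T^j x_\varepsilon=\varepsilon_j T^j y_j+R$, where $R=\sum_{k\ne j}\varepsilon_k T^j y_k$ does not depend on $\varepsilon_j$. Pair each sign vector with the one obtained by flipping $\varepsilon_j$. By the triangle inequality,
\[
\tfrac12\bigl(\|T^jy_j+R\|+\|{-T^jy_j}+R\|\bigr)\ge \|T^jy_j\|.
\]
Hence $\mathbb{E}\|T^j x_\varepsilon\|\ge\|T^j y_j\|\ge\delta$ for every $j\in\{n,\dots,2n\}$. This is the contraction principle in its most elementary form, and it is the only place where the signs are really used.

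Discarding the nonnegative terms with $j<n$, we obtain
\[
(n+1)\delta\le\sum_{j=n}^{2n}\mathbb{E}\|T^jx_\varepsilon\|\le C(2n+1)s_n,
\qquad\text{so}\qquad \delta\le \frac{C(2n+1)}{n+1}\,s_n\le 2C s_n .
\]
Letting $n\to\infty$ and using $s_n\to0$ gives $\delta\le 0$, a contradiction. Therefore $T$ is not ACB.

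The only potential obstacle is the lower bound $\mathbb{E}\|T^jx_\varepsilon\|\ge\|T^jy_j\|$, and the pairing argument above handles it. Note also that the supremum over signs in hypothesis~(1) is exactly what lets the upper bound hold uniformly in $\varepsilon$, so that it survives the averaging.
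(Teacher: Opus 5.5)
Your proposal is correct and follows essentially the same route as the paper. Both arguments test the ACB estimate on $y_\varepsilon=\sum_{k=n}^{2n}\varepsilon_k y_k$ over $2n+1$ iterates, average over all sign choices, and bound the averaged $\|T^j y_\varepsilon\|$ from below by $\|T^jy_j\|$ for $n\le j\le 2n$. The only cosmetic difference is how that lower bound is obtained: the paper uses the orthogonality identity $T^jy_j=\frac{1}{|E_n|}\sum_{\varepsilon\in E_n}\varepsilon_jT^jy_\varepsilon$ followed by the triangle inequality, whereas you flip $\varepsilon_j$ and pair terms, and both lead to $\delta\le C\frac{2n+1}{n+1}\Delta_n$.
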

	
	\begin{proof}
		Set $\Delta_n:=\underset{\varepsilon_k=\pm 1}{\sup}
		\left\|
		\sum_{k=n}^{2n}\varepsilon_k y_k
		\right\|$. Assume, by contradiction, that $T$ is absolutely Ces\`aro bounded. Then there exists a constant $C>0$ such that
		\[
		\sum_{k=0}^{n-1}\|T^k x\|
		\leq
		Cn\|x\|,
		\qquad \forall x\in X,\ \forall n\in\N.
		\]
		By $(2)$, there exists $\delta>0$ such that, for every $n\geq n_0$,
		\begin{equation}
			\label{ge_delta}
			\delta<\|T^n y_n\|.
		\end{equation}
		Fix $n\geq n_0$. Let $E_n
		:= \{-1,1\}^{n+1}$. For each $\varepsilon=(\varepsilon_n,\varepsilon_{n+1},\ldots,\varepsilon_{2n})\in E_n$, define
		\[
		y_\varepsilon
		:=
		\sum_{k=n}^{2n}\varepsilon_k y_k.
		\]
		We have $\|y_\varepsilon\|\leq\Delta_n$. Therefore, by absolute Ces\`aro boundedness,
		\[
		\sum_{k=n}^{2n}\|T^k y_\varepsilon\|
		\leq
		\sum_{k=0}^{2n}\|T^k y_\varepsilon\|
		\leq
		C(2n+1)\|y_\varepsilon\|
		\leq
		C(2n+1) \Delta_n, \quad \forall \varepsilon\in E_n.
		\]
		Hence
		\[
		\frac{1}{| E_n|}
		\sum_{\varepsilon\in E_n}
		\sum_{k=n}^{2n}\|T^k y_\varepsilon\|
		\leq
		C(2n+1)\Delta_n.
		\]
		Note that, for $n\leq j\leq 2n$, we have
		\[
		\frac{1}{| E_n|}
		\sum_{\varepsilon\in E_n}
		\varepsilon_j T^j y_\varepsilon
		=
		\sum_{k=n}^{2n}
		\left(
		\frac{1}{| E_n|}
		\sum_{\varepsilon\in E_n}
		\varepsilon_j\varepsilon_k
		\right)
		T^j y_k.
		\]
		But
		\[
		\frac{1}{| E_n|}
		\sum_{\varepsilon\in E_n}
		\varepsilon_j\varepsilon_k
		=
		\begin{cases}
			1 & k=j\\
			0 & k\neq j.
		\end{cases}
		\]
		Thus, for $n\leq j\leq 2n$, we have
		\[
		T^j y_j
		=
		\frac{1}{| E_n|}
		\sum_{\varepsilon\in E_n}
		\varepsilon_j T^j y_\varepsilon.
		\]
		Consequently,
		\[
		\sum_{j=n}^{2n}\|T^j y_j\|
		\leq
		\frac{1}{| E_n|}
		\sum_{\varepsilon\in E_n}
		\sum_{j=n}^{2n}\|T^j y_\varepsilon\|
		\leq
		C(2n+1)\Delta_n.
		\]
		Combining this with \eqref{ge_delta}, we get
		\[
		\delta< C\frac{2n+1}{n+1}\Delta_n.
		\]
		Since $\Delta_n \xrightarrow[]{n\to\infty} 0$, the right-hand side tends to $0$, which is impossible because $\delta> 0$. Therefore $T$ is not absolutely Ces\`aro bounded.
	\end{proof}
	
	Condition $(1)$ of Theorem \ref{thm:NACB} is weaker than the unconditional convergence of the series $\sum_{k\geq 1} y_k$. Therefore, we  obtain the following corollary:
	
	\begin{cor}
		\label{Criterion_NACB}
		Let $X$ be a Banach space, $T\in\mathcal{L}(X)$. Assume that there exist a nonzero vector $x\in X$ and a sequence $(y_k)_{k\geq1}$ in $X$ such that
		\begin{enumerate}
			\item  $\sum_{k=1}^{\infty}y_k$ converges unconditionally;
			\item $T^ny_n\xrightarrow[]{n\to\infty} x$.
		\end{enumerate}
		Then $T$ is not absolutely Ces\`aro bounded.
	\end{cor}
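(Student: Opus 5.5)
The plan is to deduce Corollary~\ref{Criterion_NACB} directly from Theorem~\ref{thm:NACB}, applied to the same sequence $(y_k)_{k\geq1}$. Two things must be checked: that unconditional convergence of $\sum_k y_k$ gives condition~(1) of Theorem~\ref{thm:NACB}, and that $T^ny_n\to x\neq 0$ gives condition~(2).

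For condition~(1), I will use the standard Cauchy-type characterization of unconditional convergence. A series $\sum_k y_k$ converges unconditionally if and only if for every $\eta>0$ there is $N$ such that $\bigl\|\sum_{k\in F} y_k\bigr\|<\eta$ for every finite set $F\subset\{N,N+1,\dots\}$. From this the signed version follows. Given signs $(\varepsilon_k)_{k=n}^{2n}$, split $\{n,\dots,2n\}$ into $F_+=\{k:\varepsilon_k=1\}$ and $F_-=\{k:\varepsilon_k=-1\}$. Then
\[
\Bigl\|\sum_{k=n}^{2n}\varepsilon_k y_k\Bigr\|\leq \Bigl\|\sum_{k\in F_+}y_k\Bigr\|+\Bigl\|\sum_{k\in F_-}y_k\Bigr\|<2\eta
\]
whenever $n\geq N$. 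Hence $\Delta_n\to0$.

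If one prefers to avoid citing that characterization, it can be proved directly. If it failed, one could find $\eta>0$ and disjoint finite sets $F_1,F_2,\dots$, each lying beyond the previous one, with $\bigl\|\sum_{k\in F_j}y_k\bigr\|\geq\eta$. One then builds a rearrangement that sums each $F_j$ consecutively and inserts the remaining indices in between. That rearrangement is not Cauchy, contradicting unconditional convergence. This is the only step with any real content, and it is standard.

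For condition~(2), since $T^ny_n\to x$ we have $\|T^ny_n\|\to\|x\|>0$. So there is $n_0$ with $\|T^ny_n\|\geq\|x\|/2$ for all $n\geq n_0$, which gives $\inf_{n\geq n_0}\|T^ny_n\|\geq \|x\|/2>0$. Theorem~\ref{thm:NACB} then shows that $T$ is not absolutely Ces\`aro bounded.
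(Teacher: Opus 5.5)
Your proposal is correct and is essentially the paper's argument. The paper obtains the corollary directly from Theorem~\ref{thm:NACB}, noting only that unconditional convergence of $\sum_k y_k$ implies condition~(1) and that $T^ny_n\to x\neq0$ gives condition~(2); you supply the standard Cauchy-criterion verification of condition~(1) that the paper leaves implicit.
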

	
	Recall that an operator $T\in\mathcal{L}(X)$ is said to satisfy the Frequent Hypercyclicity Criterion if there exists a dense subset $D\subset X$ and a map $S\colon D\to D$ such that, for every $x\in D$,
	\begin{enumerate}
		\item  $\sum_{n=0}^{\infty}T^n x$ converges unconditionally;
		\item $\sum_{n=0}^{\infty}S^n x$ converges unconditionally;
		\item $TSx=x$.
	\end{enumerate}
	The criterion originates in \cite{BayartGrivaux}; we refer to
	\cite{BonillaGrosse} for the strengthened form used here.
	In \cite[Corollary~6]{BBP}, it is shown that no absolutely Ces\`aro bounded operator on a Banach space is mean Li--Yorke chaotic. Moreover, every operator satisfying the Frequent Hypercyclicity Criterion is mean Li--Yorke chaotic, see also \cite{BBP}. Consequently, no absolutely Ces\`aro bounded operator can satisfy the Frequent Hypercyclicity Criterion. This result can also be obtained as a consequence of Corollary \ref{Criterion_NACB}. 
	
	\begin{cor}
		\label{cor:FHC}
		Let $X$ be a Banach space and let $T\in\mathcal{L}(X)$. If $T$ satisfies the Frequent Hypercyclicity Criterion, then $T$ is not absolutely Ces\`aro bounded.
	\end{cor}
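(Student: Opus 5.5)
We will deduce Corollary~\ref{cor:FHC} directly from Corollary~\ref{Criterion_NACB}. We may assume $X\neq\{0\}$, since otherwise the statement is vacuous or must be read with this convention. Let $D$ and $S\colon D\to D$ be as in the Frequent Hypercyclicity Criterion. Since $D$ is dense in $X\neq\{0\}$, we can pick a nonzero vector $x\in D$.

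Our candidate sequence is $y_k:=S^k x$ for $k\geq1$. These vectors all lie in $D$, because $S$ maps $D$ into $D$. We then check the two hypotheses of Corollary~\ref{Criterion_NACB}.

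\textbf{Hypothesis (1).} The series $\sum_{k\geq1} y_k=\sum_{k\geq1}S^k x$ converges unconditionally. This is exactly condition (2) of the criterion applied to $x$; removing the single term $k=0$ does not affect unconditional convergence.

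\textbf{Hypothesis (2).} We show that $T^nS^n x=x$ for every $n\geq 1$, by induction on $n$. Condition (3) of the criterion gives $TSz=z$ for every $z\in D$. Assume $T^{n-1}S^{n-1}x=x$. Since $S^{n-1}x\in D$, we have $TS(S^{n-1}x)=S^{n-1}x$. Therefore
\[
T^nS^nx=T^{n-1}\bigl(TS\,S^{n-1}x\bigr)=T^{n-1}S^{n-1}x=x.
\]
Hence $T^ny_n=x$ for all $n$, so in particular $T^ny_n\to x\neq0$.

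Corollary~\ref{Criterion_NACB} then yields that $T$ is not absolutely Ces\`aro bounded. There is no real obstacle here. The only points needing care are that $D$ must be invariant under $S$, which is what makes the iterated identity $T^nS^n=\mathrm{id}$ on $D$ valid, and the existence of a nonzero $x\in D$, which uses the density of $D$ and $X\neq\{0\}$. Condition (1) of the criterion, the unconditional convergence of $\sum_n T^n x$, is not even needed.
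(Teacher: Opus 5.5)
Your proof is correct and follows exactly the route the paper indicates. The paper deduces the result from Corollary~\ref{Criterion_NACB} and remarks that only conditions (2) and (3) of the criterion are needed; you apply that corollary with $y_k=S^kx$ for a nonzero $x\in D$.

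One small correction: on $X=\{0\}$ the statement is not vacuous but actually false, since the zero operator trivially satisfies the criterion and is ACB. The assumption $X\neq\{0\}$ is therefore genuinely required, as your second reading suggests.
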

	Observe that, in order to apply Corollary~\ref{cor:FHC}, only the second and third conditions of the Frequent Hypercyclicity Criterion are needed. Since every operator satisfying the Frequent Hypercyclicity Criterion is both chaotic and upper frequently hypercyclic, Corollary~\ref{cor:nochaos} below generalizes Corollary~\ref{cor:FHC}.

	The following example shows that the conclusion of Corollary \ref{cor:FHC} does not extend to Ces\`aro ratio-bounded operators. More precisely, there exists a Ces\`aro ratio-bounded operator satisfying the Frequent Hypercyclicity Criterion and, consequently, such an operator is also mean Li--Yorke chaotic.
	
	\begin{example}
		\label{ex:p-CRB:FHC}
		Let $1<p<\infty$ and choose $\alpha$ such that $\frac1p<\alpha<1$. Let $\omega=(\omega_k)_{k\in\N}$ be the weight defined by
		\[
		\omega_k=\frac1{k^\alpha},
		\qquad k\geq1,
		\]
		and let $B$ be the backward shift on $\ell^p(\mathbb N,\omega)$. Then
		\[
		\|B^n\|_{p,\omega}
		=
		\sup_{k\geq1}\frac{\omega_k}{\omega_{k+n}}
		=(n+1)^\alpha.
		\]
		Hence
		\[
		\frac1n\sum_{k=0}^{n-1}
		\frac{\|B^{n-1}\|}{\|B^k\|}
		=
		\frac1n\sum_{k=0}^{n-1}
		\left(\frac{n}{k+1}\right)^{\alpha}.
		\]
		Since $\alpha<1$, by the integral test, there exists a constant $C>0$ such that
		\[
		\frac1n\sum_{k=0}^{n-1}
		\left(\frac{n}{k+1}\right)^{\alpha}
		\leq C,
		\qquad n\geq1.
		\]
		Thus $B$ is Ces\`aro ratio-bounded. Note that, by Example~\ref{mainexample}, $B$ is not absolutely Ces\`aro bounded.
		
		Let us show that $B$ satisfies the Frequent Hypercyclicity Criterion (FHC). Let $D=c_{00}(\N)$ be the space of finitely supported sequences. Then $D$ is dense in $\ell^p(\N,\omega)$. Define a linear map $S:D\to D$ by
		$$
		Se_k=e_{k+1}.
		$$
		Then $BSe_k=e_k$ for every $k\in\N$, and hence, by linearity, $BSx=x$ for all $x\in D$. Thus, the third condition of the FHC is satisfied.
		
		We now verify the first condition. Let $x\in D$. Since $x$ has finite support, there exists $N_0\geq 1$ such that
		$$
		B^n x=0,\qquad n\geq N_0.
		$$
		Therefore, the series $\sum_{n=0}^{\infty}B^n x$ is a finite sum and hence converges unconditionally.
		
		Let us verify the second condition.
		Fix $k\in\N$, and let $(\varepsilon_n)_{n\geq 0}$ be any sequence in $\{-1,1\}^{\N_0}$. We have
		\[\Big\|\sum_{n=0}^{\infty}\varepsilon_n S^n e_k\Big\|_{p,\omega}^{p}
		=\sum_{n=0}^{\infty}\dfrac{1}{(k+n)^{\alpha p}}
		\leq \sum_{n=1}^{\infty}\dfrac{1}{n^{\alpha p}}<\infty,\]
		since $\alpha p>1$.  Hence, $\sum_{n=0}^{\infty}\varepsilon_n S^n e_k$ converges in $\ell^p(\N,\omega)$ for every choice of  $(\varepsilon_n)_{n\ge0}$ in $\{-1,1\}^{\N_0}$. Thus, $\sum_{n=0}^{\infty}S^n e_k$ converges unconditionally in $\ell^p(\N,\omega)$. By linearity, $\sum_{n=0}^{\infty}S^n x$ converges unconditionally for every $x\in D$.
		
		Consequently, $B$ satisfies the Frequent Hypercyclicity Criterion. In particular, $B$ is mean Li--Yorke chaotic.
		
	\end{example}
	
	If, in Corollary \ref{Criterion_NACB}, we replace unconditional convergence by absolute convergence, then we can even deduce that the operator is not Ces\`aro bounded.
	\begin{thm}
		\label{Criterion_NCB}
		Let $X$ be a Banach space and let $T\in\mathcal{L}(X)$. Assume that there exists  a sequence $(y_k)_{k\geq1}$ in $X$ such that
		\begin{enumerate}
			\item  $\sum_{k=1}^{\infty}y_k$ is absolutely convergent;
			\item There exists $n_0\geq 1$ such that $\underset{n\ge n_0}{\inf}\|T^n y_n\|>0$.
		\end{enumerate}
		Then $\sum_{n=1}^{\infty} \frac{1}{\|T^n\|} <  \infty$ and  $\sup_{n\in\N}\dfrac{\|T^n\|}{n}=\infty$. In particular,  $T$ is not  Ces\`aro bounded.
	\end{thm}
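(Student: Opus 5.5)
The plan is to extract everything from the elementary inequality $\delta \le \|T^n y_n\| \le \|T^n\|\,\|y_n\|$, valid for $n\ge n_0$, where $\delta:=\inf_{n\ge n_0}\|T^n y_n\|>0$ by hypothesis (2).

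\emph{Step 1: summability of $1/\|T^n\|$.} For $n\ge n_0$ the inequality gives $\|T^n\|>0$ (so no power of $T$ vanishes from $n_0$ on) and
\[
\frac{1}{\|T^n\|}\le \frac{\|y_n\|}{\delta}.
\]
Hypothesis (1) says $\sum_n\|y_n\|<\infty$, so $\sum_{n\ge n_0}1/\|T^n\|<\infty$. The finitely many terms with $n<n_0$ are finite as long as $\|T^n\|\neq 0$, which holds: if $T^m=0$ for some $m$, then $T^n=0$ for all $n\ge m$, contradicting $\|T^n\|>0$ for large $n$. This gives $\sum_{n=1}^\infty 1/\|T^n\|<\infty$.

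\emph{Step 2: $\sup_n\|T^n\|/n=\infty$.} Suppose instead $\|T^n\|\le Kn$ for all $n$. Then $1/\|T^n\|\ge 1/(Kn)$, and the harmonic series would make $\sum 1/\|T^n\|$ diverge, contradicting Step 1. So $\|T^n\|/n$ is unbounded.

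\emph{Step 3: Ces\`aro boundedness fails.} If $T$ were Ces\`aro bounded, with $\|M_n(T)\|\le C$ for all $n$, then
\[
T^n=(n+1)M_{n+1}(T)-nM_n(T),
\]
so $\|T^n\|\le (2n+1)C\le 3Cn$ for $n\ge1$. This contradicts Step 2.

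All three steps are short. The only points needing care are the non-nilpotency remark, which makes every term of $\sum 1/\|T^n\|$ finite, and the identity $T^n=(n+1)M_{n+1}(T)-nM_n(T)$, which converts Ces\`aro boundedness into the growth bound $\|T^n\|=\mathcal{O}(n)$.
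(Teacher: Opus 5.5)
Your proof is correct and follows the paper's argument: the inequality $\delta\le\|T^n\|\,\|y_n\|$ gives summability of $1/\|T^n\|$, and a bound $\|T^n\|\le Kn$ is ruled out via the harmonic series (the paper contradicts hypothesis (1) directly rather than Step 1, which makes no real difference). Ces\`aro boundedness is then excluded through $\|T^n\|=\mathcal{O}(n)$, and your non-nilpotency remark and the identity $T^n=(n+1)M_{n+1}(T)-nM_n(T)$ make explicit two points the paper leaves implicit or settles by citation.
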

	\begin{proof}
		By $(2)$, there exists $\delta>0$ such that $\delta \leq  \|T^ny_n\| \leq \|T^n\| \|y_n\|$ for all $n\ge n_0$. Hence, by $(1)$, we get
		$$\sum_{n=1}^{\infty} \frac{1}{\|T^n\|} <  \infty.$$
		Now, aiming for a contradiction, assume that $\|T^n\|=\mathcal{O}(n)$. There then exists $C>0$  such that
		$$\|T^n\|\leq C n,\quad n\geq 1.$$
		Thus
		$$\|y_n\|\geq \dfrac{\delta}{C}\times\dfrac{1}{n},$$
		which contradicts the condition $(1)$.

        Finally, if $T$ were Ces\`aro bounded, then it would satisfy $\|T^n\| = \mathcal{O}(n)$, see e.g. \cite{Mull1}. This yields the conclusion.
	\end{proof}
	
	Let $X$ be a Banach space and let $T\in\mathcal{L}(X)$. We denote by $Z(T)$ the set of vectors whose orbit admits $0$ as an
	accumulation point, that is,
	\[
	Z(T)
	:=
	\left\{
	x\in X:
	\liminf_{n\to\infty}\|T^n x\|=0
	\right\}.
	\]

	\begin{thm}
		\label{th:noch:ufr}
		Let $X$ be a Banach space, let $T\in\mathcal{L}(X)$ be strongly Ces\`aro bounded, and set
		\[
		W(T):=
		\left\{
		x\in X:
		\frac{1}{N}\sum_{n=0}^{N-1}
		\left|\left\langle x^*,T^n x\right\rangle\right|
		\longrightarrow 0
		\text{ for every }x^*\in X^*
		\right\}.
		\]
		Then the following statements hold:
		\begin{enumerate}
			\item $W(T)$ is a closed linear subspace of $X$;
			\item $Z(T)\subset W(T)$;
			\item $W(T)\cap \UFRec(T)=\{0\}$.
		\end{enumerate}
		Consequently, if $Z(T)$ is dense in $X$, then
		\[
		W(T)=X,\qquad \UFRec(T)=\{0\},\qquad \Per(T)=\{0\}.
		\]
	\end{thm}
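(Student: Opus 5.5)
The proof checks the three items directly from the definitions. Strong Ces\`aro boundedness is used in items (1) and (2). Item (3) is a density argument with a single well-chosen functional. Throughout, $C:=C_{1,\mathrm{sc}}(T)$, so that
\[
\frac1N\sum_{n=0}^{N-1}\left|\left\langle x^*,T^n y\right\rangle\right|\le C\|x^*\|\,\|y\|\qquad\text{for all } y\in X,\ x^*\in X^*,\ N\ge1 .
\]

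\emph{Item (1).} Linearity of $W(T)$ follows from $|\langle x^*,T^n(\lambda x+\mu y)\rangle|\le|\lambda||\langle x^*,T^nx\rangle|+|\mu||\langle x^*,T^ny\rangle|$. For closedness, let $x_j\in W(T)$ with $x_j\to x$, and fix $x^*$. By the SCB estimate applied to $y=x-x_j$,
\[
\frac1N\sum_{n<N}|\langle x^*,T^nx\rangle|\le \frac1N\sum_{n<N}|\langle x^*,T^nx_j\rangle|+C\|x^*\|\,\|x-x_j\|.
\]
Taking $\limsup_N$ and then $j\to\infty$ gives $x\in W(T)$. This is the only place where uniformity in $N$ is really needed, and SCB provides exactly that.

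\emph{Item (2).} Let $x\in Z(T)$, $x^*\in X^*$ and $\varepsilon>0$, and choose $m$ with $\|T^mx\|<\varepsilon$. For $N>m$, split the sum at $m$ and shift the index in the tail, writing $T^n x=T^{n-m}(T^mx)$:
\[
\frac1N\sum_{n<N}|\langle x^*,T^nx\rangle|\le\frac1N\sum_{n<m}|\langle x^*,T^nx\rangle|+\frac1N\sum_{j=0}^{N-1}|\langle x^*,T^j(T^mx)\rangle|.
\]
The first term tends to $0$ as $N\to\infty$. By SCB the second term is at most $C\|x^*\|\varepsilon$. Hence the $\limsup$ is at most $C\|x^*\|\varepsilon$, and since $\varepsilon$ is arbitrary, $x\in W(T)$.

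\emph{Item (3).} Let $x\in W(T)\cap\UFRec(T)$ and suppose $x\neq0$. By Hahn--Banach choose $x^*$ with $\|x^*\|=1$ and $\langle x^*,x\rangle=\|x\|$. The set
\[
U:=\{y:\ |\langle x^*,y-x\rangle|<\|x\|/2\}
\]
is an open neighborhood of $x$, and $|\langle x^*,T^nx\rangle|>\|x\|/2$ for every $n\in N_T(x,U)$. Therefore
\[
\frac1N\sum_{n<N}|\langle x^*,T^nx\rangle|\ge\frac{\|x\|}2\cdot\frac{|N_T(x,U)\cap\{0,\dots,N-1\}|}{N}.
\]
The $\limsup$ of the right-hand side is $\frac{\|x\|}{2}\,\overline{\operatorname{dens}}(N_T(x,U))>0$, which contradicts $x\in W(T)$. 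So $x=0$; this step does not even need SCB.

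\emph{Consequences.} If $Z(T)$ is dense, then items (1) and (2) give $W(T)=\overline{Z(T)}=X$, and item (3) then gives $\UFRec(T)=\{0\}$. Finally, a periodic vector $x$ with $T^px=x$ satisfies $p\N_0\subset N_T(x,U)$ for every neighborhood $U$ of $x$. Its return sets therefore have upper density at least $1/p$, so $\Per(T)\subset\UFRec(T)=\{0\}$.

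I expect no serious obstacle. The only delicate point is item (1): closedness of $W(T)$ fails for a general operator, and it is precisely the uniform SCB estimate that makes the approximation argument work.
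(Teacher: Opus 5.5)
Your proposal is correct and follows essentially the same argument as the paper in every item: the triangle inequality with the SCB bound for closedness, splitting and shifting the Ces\`aro sum for $Z(T)\subset W(T)$, and a single Hahn--Banach functional with a slab neighborhood for (3). The differences are cosmetic (normalizing $\|x^*\|=1$, $\langle x^*,x\rangle=\|x\|$ rather than $\langle x^*,x\rangle=1$, and spelling out why periodic vectors are upper frequently recurrent), and one remark is inaccurate: item (2) also needs the SCB estimate uniformly in $N$, for the vector $T^m x$, so item (1) is not the only place it is used.
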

	
	\begin{proof}
		$(1)$. It is clear that $W(T)$ is a linear subspace of $X$. It remains to prove that $W(T)$ is closed. Let $(x_j)_{j\ge 1}$ be a sequence in $W(T)$ converging to some $x\in X$. Fix $x^*\in X^*$. By the triangle inequality and the strong Ces\`aro boundedness of $T$, for every $j\ge1$ and every $N\geq1$, we obtain
		\begin{align*}
			\frac1N\sum_{n=0}^{N-1}\left|\left\langle x^*,T^n x\right\rangle\right|
			&\leq
			\frac1N\sum_{n=0}^{N-1}\left|\left\langle x^*,T^n(x-x_j)\right\rangle\right| +\frac1N\sum_{n=0}^{N-1}\left|\left\langle x^*,T^nx_j\right\rangle\right|\\
			&\leq C_{1,\mathrm{sc}}\|x^\ast\|\,\|x-x_j\|+\frac1N\sum_{n=0}^{N-1}\left|\left\langle x^*,T^nx_j\right\rangle\right|.
		\end{align*}
		Since $x_j\in W(T)$, we get
		\[
		\limsup_{N\to\infty}\frac1N\sum_{n=0}^{N-1}\left|\left\langle x^*,T^n x\right\rangle\right|
		\leq C_{1,\mathrm{sc}}\|x^\ast\|\,\|x-x_j\|.
		\]
		Letting $j$ tend to infinity, we obtain $\limsup_{N\to\infty}\frac1N\sum_{n=0}^{N-1}\left|\left\langle x^*,T^n x\right\rangle\right|=0$. Hence $x\in W(T)$. Therefore, $W(T)$ is closed.
		
		$(2)$. Suppose  that $x\in Z(T)$ and let $x^\ast \in X^\ast$. Then there exists an increasing sequence of integers $(k_j)_{j\geq1}$  such that
		\[
		\|T^{k_j}x\|\xrightarrow[]{j\to\infty}0.
		\]
		Fix $k\geq0$. For every $N>k$, we have
		\[
		\begin{aligned}
			\frac{1}{N}\sum_{n=0}^{N-1}\left|
			\langle x^*,T^n x\rangle
			\right|
			&=
			\frac{1}{N}\sum_{n=0}^{k-1}\left|
			\langle x^*,T^n x\rangle
			\right|
			+
			\frac{1}{N}\sum_{n=k}^{N-1}\left|
			\langle x^*,T^n x\rangle
			\right|\\
			&=
			\frac{1}{N}\sum_{n=0}^{k-1}\left|
			\langle x^*,T^n x\rangle
			\right|
			+
			\frac{1}{N}\sum_{n=0}^{N-k-1}\left|
			\langle x^*,T^n(T^k x)\rangle
			\right|.
		\end{aligned}
		\]
		Since $T$ is strongly Ces\`aro bounded, we have
		\[
		\frac{1}{N-k}
		\sum_{n=0}^{N-k-1}\left|
		\langle x^*,T^n(T^k x)\rangle
		\right|
		\leq
		C_{1,\mathrm{sc}}\|x^\ast\|\|T^k x\|.
		\]
		Hence
		\[
		\begin{aligned}
			\frac{1}{N}
			\sum_{n=0}^{N-k-1}\left|
			\langle x^*,T^n(T^k x)\rangle
			\right|
			&=
			\frac{N-k}{N}
			\frac{1}{N-k}
			\sum_{n=0}^{N-k-1}\left|
			\langle x^*,T^n(T^k x)\rangle
			\right|\\
			&\leq
			\frac{N-k}{N}
			C_{1,\mathrm{sc}}\|x^\ast\|\,\|T^k x\|\\
			&\leq
			C_{1,\mathrm{sc}}\|x^\ast\|\|T^k x\|.
		\end{aligned}
		\]
		Therefore,
		\[
		\limsup_{N\to\infty}
		\frac{1}{N}\sum_{n=0}^{N-1}\left|
		\langle x^*,T^n x\rangle
		\right|
		\leq
		C_{1,\mathrm{sc}}\|x^\ast\|\,\|T^k x\|.
		\]
		Now, taking $k=k_j$ and letting $j\to\infty$, we obtain
		\[
		\limsup_{N\to\infty}
		\frac{1}{N}\sum_{n=0}^{N-1}\left|
		\langle x^*,T^n x\rangle
		\right|
		=0.
		\]
		Hence
		$$\lim_{N\to\infty}\frac{1}{N}\sum_{n=0}^{N-1}\left|
		\langle x^*,T^n x\rangle
		\right|=0.$$
		Therefore, $x\in W(T)$.
		
		$(3)$. Suppose that $0\neq x\in W(T)\cap\UFRec(T)$.  Choose
		$x^*\in X^*$ with $\langle x^*,x\rangle=1$ and set
		\[
		U:=\{y\in X:|\langle x^*,y-x\rangle|<1/2\}.
		\]
		If $n\in N_T(x,U)$, then
		$|\langle x^*,T^n x\rangle|>1/2$.  Therefore
		\[
		\limsup_{N\to\infty}\frac1N\sum_{n=0}^{N-1}
		|\langle x^*,T^n x\rangle|
		\geq \frac12\overline{\mathrm{dens}}N_T(x,U)>0,
		\]
		contrary to $x\in W(T)$.  This proves (3).
		
		Finally, if $Z(T)$ is dense, (1) and (2) give $W(T)=X$, so (3) gives
		$\UFRec(T)=\{0\}$.  Every periodic vector is upper frequently recurrent; hence $\Per(T)=\{0\}$.
	\end{proof}
	
	\begin{cor}\label{cor:nochaos}
		No strongly Ces\`aro bounded operator on a nonzero Banach space is
		chaotic.  Moreover, no strongly Ces\`aro bounded operator is upper frequently
		hypercyclic (and hence none is frequently hypercyclic).
	\end{cor}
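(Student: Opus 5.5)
The plan is to derive both statements from Theorem~\ref{th:noch:ufr}: in each case, show that $Z(T)$ is dense in $X$, and then use its conclusion to reach a contradiction.

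\emph{Chaotic operators.} Suppose $T$ is strongly Ces\`aro bounded and chaotic on $X\neq\{0\}$. Since $T$ is hypercyclic, it has a hypercyclic vector $x$, and $0$ lies in the closure of the orbit of $x$. Because $X\neq\{0\}$, the space has no isolated points, so the orbit of $x$ meets every neighbourhood of $0$ infinitely often; hence $\liminf_n\|T^nx\|=0$ and $x\in Z(T)$. Every vector $T^mx$ in the orbit is also hypercyclic, so it lies in $Z(T)$ too. The orbit is dense, so $Z(T)$ is dense, i.e. $\HC(T)\subset Z(T)$. Theorem~\ref{th:noch:ufr} then gives $\Per(T)=\{0\}$. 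This contradicts the density of $\Per(T)$ in the nonzero space $X$.

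\emph{Upper frequently hypercyclic operators.} Suppose $x$ is an upper frequently hypercyclic vector for a strongly Ces\`aro bounded $T$. Then $X\neq\{0\}$: the ``for every nonempty open set'' condition concerns nontrivial dynamics, and if $X=\{0\}$ the statement is trivial or vacuous under the convention used. The vector $x$ is hypercyclic, so, as above, $Z(T)$ is dense. Theorem~\ref{th:noch:ufr} then gives $W(T)=X$ and $\UFRec(T)=\{0\}$. On the other hand, $x\neq0$, since the orbit of $0$ is not dense in a nonzero space, and $x$ is upper frequently recurrent: every neighbourhood $U$ of $x$ is a nonempty open set, so $N_T(x,U)$ has positive upper density. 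Hence $x\in\UFRec(T)\setminus\{0\}$, a contradiction.

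Alternatively, $x\in W(T)$ can be contradicted directly with part~(3). Frequent hypercyclicity implies upper frequent hypercyclicity, since positive lower density implies positive upper density, so the parenthetical claim follows.

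The only delicate point is the claim that a hypercyclic vector lies in $Z(T)$. It requires $0$ to be an accumulation point of the orbit, not merely an orbit element. If $T^nx=0$ for some $n$, the orbit would be finite and could not be dense in a nonzero space. So the orbit is infinite, it accumulates at $0$, and $\liminf_n\|T^nx\|=0$.
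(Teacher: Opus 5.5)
Your proof is correct. The chaotic case coincides with the paper's argument, but the upper frequently hypercyclic case takes a more elementary route. The paper invokes \cite[Theorem~2.5]{BGLP} to obtain a vector in $\HC(T)\cap\UFRec(T)$. You instead note that an upper frequently hypercyclic vector $x$ is already such a vector: every neighbourhood of $x$ contains a nonempty open set, and return sets are monotone in $U$. Your ``alternative'' ($x\in\HC(T)\subset Z(T)\subset W(T)$ by part~(2), then part~(3)) is exactly the paper's last step with the citation made unnecessary, so your argument needs nothing beyond Theorem~\ref{th:noch:ufr}. You also check explicitly that $x\neq 0$, which the paper's contradiction (``$\HC(T)\cap\UFRec(T)\neq\emptyset$'' versus ``$\subset\{0\}$'') silently needs.

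Two corrections. First, the case $X=\{0\}$ is not vacuous: the zero operator on $\{0\}$ is strongly Ces\`aro bounded and, with the definitions used here, upper frequently hypercyclic. So the second sentence has to be read under the same assumption $X\neq\{0\}$ as the first, and you should state this as a hypothesis rather than appeal to a convention. Second, your ``delicate point'' is not delicate. The condition $0\in\overline{\operatorname{Orb}(x,T)}$ always gives $\liminf_n\|T^nx\|=0$: if some iterate vanishes, all later ones do; otherwise the finitely many early iterates stay at positive distance from $0$. Hence $\HC(T)\subset Z(T)$ needs no appeal to the absence of isolated points.
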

	
	\begin{proof}
		Assume that $T$ is strongly Ces\`aro bounded. Let us first show that $T$ cannot be chaotic. Suppose, for contradiction, that $T$ is chaotic. Since $T$ is hypercyclic and $\HC(T)\subset Z(T)$, we obtain that $Z(T)$ is dense in $X$. Theorem \ref{th:noch:ufr} gives $\Per(T)=\{0\}$, which is a contradiction.
		
		Now, assume that $T$ is upper frequently hypercyclic. By \cite[Theorem 2.5]{BGLP},   $T$ admits a hypercyclic upper frequently recurrent vector. Hence $\HC(T)\cap\UFRec(T)\neq\emptyset$, but by  Theorem \ref{th:noch:ufr}, we have $\HC(T)\cap\UFRec(T)\subset W(T)\cap \UFRec(T)=\{0\}$, which is a contradiction.
	\end{proof}

    As mentioned above, there exist mixing ACB operators \cite[Corollary~2.3]{Mull1}. We strengthen this result by exhibiting a mixing operator that is absolutely strongly Kreiss bounded (ASKB). We refer to \cite{Cuny1} for the definition of ASKB operators and to \cite[Proposition~4.10]{Cuny1} for the fact that every ASKB operator is ACB. Consider the weight $\omega=(\omega_k)_{k\in\mathbb N}$ defined by
    \[
    \omega_k=\frac{1}{\log(k+1)},
    \quad k\geq1.
    \]
    The backward shift $B$ on $\ell^1(\mathbb N,\omega)$ is both absolutely strongly Kreiss bounded and mixing. Indeed, since $\omega_k\to0$ as $k\to\infty$, the operator $B$ is mixing; see, for example, \cite{GrossePeris}. Using the same argument as in the proof of \cite[Proposition~4.9]{Cuny1}, we obtain that $B$ is ASKB.

	\section{On the set of vectors satisfying the \texorpdfstring{$p$}{p}-ACB condition}
	
	In this final section, we extend the genericity statement of \cite[Theorem~4]{BBP} from ACB to $p$-ACB operators for every $p>0$. The argument is an application of the following Baire-category principle for homogeneous families of lower semicontinuous mappings.
	
	\begin{prop}\label{setofvectorspacb}
		Let $X$ be a Banach space, $T\in\mathcal{L}(X)$ and $0<p<\infty$. The following assertions are equivalent:
		\begin{enumerate}
			\item $T$ is not $p$-absolutely Ces\`aro bounded.
			\item There exists a vector $x\in X$ such that
			$$\underset{N\in\N_0}{\sup}\dfrac{1}{N+1}\sum_{j=0}^{N} \|T^jx\|^p=\infty.$$
			\item The set of all vectors $x\in X$ such that
			$$\underset{N\in\N_0}{\sup}\dfrac{1}{N+1}\sum_{j=0}^{N} \|T^jx\|^p=\infty$$
			is residual in $X$.
		\end{enumerate}
	\end{prop}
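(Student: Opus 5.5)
The implications (3) $\Rightarrow$ (2) and (2) $\Rightarrow$ (1) are immediate. A residual subset of the Banach space $X$ is nonempty by the Baire category theorem. If some $x$ has unbounded Ces\`aro averages of $\|T^jx\|^p$, then no constant $C$ can satisfy $\frac{1}{N+1}\sum_{j=0}^N\|T^jx\|^p\le C\|x\|^p$, so $T$ is not $p$-ACB. The real work is (1) $\Rightarrow$ (3), which is a Banach--Steinhaus argument for the nonlinear, but homogeneous, maps
\[
\Phi_N(x):=\Bigl(\frac{1}{N+1}\sum_{j=0}^{N}\|T^jx\|^p\Bigr)^{1/p},\qquad N\in\N_0 .
\]

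I would first record the properties of the $\Phi_N$ that the argument uses. Each $\Phi_N$ is continuous, since it is a finite combination of continuous functions. Each $\Phi_N$ is absolutely homogeneous: $\Phi_N(\lambda x)=|\lambda|\Phi_N(x)$. Each $\Phi_N$ is quasi-subadditive: $\Phi_N(x+y)\le K_p(\Phi_N(x)+\Phi_N(y))$ with $K_p=\max(1,2^{1/p-1})$. For $p\ge1$ this is Minkowski's inequality in $\ell^p_{N+1}$ applied to $(\|T^jx\|)_j$ together with $\|T^j(x+y)\|\le\|T^jx\|+\|T^jy\|$. For $p<1$ one uses instead the $p$-triangle inequality $(a+b)^p\le a^p+b^p$. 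Set $\Phi:=\sup_N\Phi_N$. It is lower semicontinuous as a supremum of continuous functions, so for each $m\in\N$ the set $F_m:=\{x:\Phi(x)\le m\}$ is closed. The set in (3) is the complement of $\bigcup_m F_m$, so it is a $G_\delta$, and it suffices to show that each $F_m$ has empty interior.

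Suppose instead that some $F_m$ contains a ball $B(x_0,r)$. For $\|z\|<r$, the identity $z=(x_0+z)-x_0$ and quasi-subadditivity give $\Phi(z)\le K_p(\Phi(x_0+z)+\Phi(-x_0))\le 2K_pm$, using $\Phi(-x_0)=\Phi(x_0)\le m$. By homogeneity, $\Phi(x)\le (2K_pm/r')\|x\|$ for all $x$ and any $r'<r$. Raising to the power $p$ gives the $p$-ACB inequality with constant $(2K_pm/r')^p$, which contradicts (1). Hence every $F_m$ is nowhere dense, and the set in (3) is a dense $G_\delta$, hence residual.

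To match the paper's announced framing, I would state this as a general lemma: for a family of lower semicontinuous, absolutely homogeneous, quasi-subadditive maps $X\to[0,\infty)$, either the family is uniformly bounded by $C\|x\|$, or its pointwise supremum is infinite on a residual set. The proposition is then this lemma applied to $(\Phi_N)$. The only delicate point is the case $0<p<1$. There $\Phi_N$ is not subadditive, so the classical proof must be run with the constant $K_p$. Alternatively one can work directly with $\Phi_N^p$, which is subadditive and $p$-homogeneous, and the same ball argument applies. Once this is handled the rest is routine.
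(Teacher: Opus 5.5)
Your argument is correct and is essentially the paper's. The paper applies its Baire-category Theorem~\ref{ThmA} to $\varphi_N=\Phi_N^p$ and shows directly that each open set $\{\sup_N\varphi_N>s\}$ is dense: for a unit vector $v$ with huge $\sup_N\varphi_N(v)$, one of $x\pm\varepsilon v$ must lie in it, which is the contrapositive of your step ``a ball inside $F_m$ forces a uniform $p$-ACB bound'' and rests on the same difference-plus-homogeneity trick. The one real difference is generality: the paper's admissibility condition (bounding $\varphi(x-y)$ by a maximum over coordinatewise choices on a product of Banach spaces, with homogeneity only for $\lambda\ge0$) also covers the jointly non-subadditive maps $(x,x^*)\mapsto\frac1n\sum_k|\langle x^*,T^kx\rangle|^p$ of Corollary~\ref{Cor_SCB}, which your single-space quasi-subadditive lemma would not handle directly.
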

	
	If $(X_1,\| \cdot \|_{X_1}),\ldots, (X_n, \| \cdot \|_{X_n})$ are Banach spaces, we let $\| . \| : X_1 \times \cdots \times X_n \to \mathbb{R}_+$ be the product norm, defined for every $(x_1,\ldots, x_n)$ by
	$$
	\|(x_1,\ldots,x_n)\| = \max_{1\leq i \leq n} \|x_i\|_{X_i}.
	$$
	
	\begin{df}
		Let $X_1,\ldots,X_n$ be Banach spaces, $I$ an arbitrary index set, and let $(\varphi_\alpha)_{\alpha\in I}$ be a family of mappings from $X_1\times\cdots\times X_n$ into $[0,\infty)$. We say that $(\varphi_\alpha)_{\alpha\in I}$ is admissible if it satisfies the following:
		\begin{itemize}
			\item For every $\alpha\in I$, $\varphi_\alpha$ is lower semicontinuous;
			\item There exists $r>0$ such that, for every $\lambda \geq 0$, every $x \in X_1\times\cdots\times X_n $ and every $\alpha\in I$,
			$$\varphi_\alpha(\lambda x)=\lambda^r \varphi_\alpha(x);$$
			\item There exists $C>0$ such that for every $(x_1,\ldots,x_n)\in X_1\times\cdots\times X_n $, every $(y_1,\ldots,y_n)\in X_1\times\cdots\times X_n$ and every $\alpha\in I$,
			$$\varphi_\alpha(x_1-y_1,\ldots,x_n-y_n)\leq C \max_{z_i \in \{ x_i,y_i\}} \ \varphi_\alpha(z_1,\ldots,z_n),$$
			where the maximum is taken over all $2^n$ choices $z_i\in\{x_i,y_i\}$.
		\end{itemize}
	\end{df}
	
	\begin{thm}\label{ThmA}
		Let $X_1,\ldots,X_n$ be Banach spaces, $I$ an arbitrary index set, and let $(\varphi_\alpha)_{\alpha\in I}$ be an admissible family of mappings from $X_1\times\cdots\times X_n$ into $[0,\infty)$. Then the following assertions are equivalent:
		\begin{enumerate}
			\item For every $M>0$, there exist $\alpha\in I$ and $(x_1,\ldots,x_n)\in X_1\times\cdots\times X_n$, with $\|(x_1,\ldots,x_n)\|=1$ such that
			$$\varphi_\alpha(x_1,\ldots,x_n)>M.$$
			\item There exists a vector $(x_1,\ldots,x_n)\in X_1\times\cdots\times X_n$ such that
			$$\underset{\alpha\in I }{\sup}\,\varphi_\alpha(x_1,\ldots,x_n)=\infty.$$
			\item The set of all vectors $(x_1,\ldots,x_n)\in X_1\times\cdots\times X_n$ such that
			$$\underset{\alpha\in I }{\sup}\,\varphi_\alpha(x_1,\ldots,x_n)=\infty$$
			is a dense $G_\delta$-subset of $X_1\times\cdots\times X_n$.
		\end{enumerate}
	\end{thm}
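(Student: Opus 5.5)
We prove $(3)\Rightarrow(2)\Rightarrow(1)\Rightarrow(3)$. The implication $(3)\Rightarrow(2)$ is immediate, since a dense $G_\delta$ subset of the nonzero Banach space $X_1\times\cdots\times X_n$ is nonempty (Baire). For $(2)\Rightarrow(1)$ we argue by contraposition using homogeneity. If (1) fails, there is $M>0$ with $\varphi_\alpha(x)\le M$ for all $\alpha$ and all $x$ with $\|x\|=1$. For any $x\neq0$, homogeneity of degree $r$ gives $\varphi_\alpha(x)=\|x\|^r\varphi_\alpha(x/\|x\|)\le M\|x\|^r$. Also $\varphi_\alpha(0)=\varphi_\alpha(0\cdot 0)=0$ (taking $\lambda=0$, with the convention $0^r=0$ for $r>0$). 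Hence $\sup_\alpha\varphi_\alpha(x)<\infty$ for every $x$, which contradicts (2).

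The substantial part is $(1)\Rightarrow(3)$, a Banach--Steinhaus-type argument. Set $\Phi:=\sup_\alpha\varphi_\alpha$, which is lower semicontinuous with values in $[0,\infty]$. For $m\in\N$ let $F_m:=\{x:\Phi(x)\le m\}$. Each $F_m$ is closed, being an intersection of the closed sets $\{\varphi_\alpha\le m\}$. The set in (3) equals $\bigcap_m (X\setminus F_m)$, where $X:=X_1\times\cdots\times X_n$, so it is a $G_\delta$. By Baire's theorem it suffices to show that each $F_m$ has empty interior. Then each open set $X\setminus F_m$ is dense, and so is the countable intersection.

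To show $F_m$ has empty interior, suppose on the contrary that $F_m$ contains a closed ball $\overline{B}(a,\rho)$ in the max-norm. The third admissibility condition is designed to transfer boundedness from this ball to a ball around $0$. Take any $h=(h_1,\ldots,h_n)$ with $\|h\|\le\rho$ and write $h=x-y$ with $x=a+h$ and $y=a$, coordinatewise $h_i=(a_i+h_i)-a_i$. Every mixed vector $z=(z_1,\ldots,z_n)$ with $z_i\in\{a_i+h_i,a_i\}$ has the form $a+h'$, where each $h'_i\in\{h_i,0\}$. In the max-norm, $\|h'\|\le\|h\|\le\rho$, so $z\in\overline{B}(a,\rho)\subset F_m$. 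This is exactly why the product max-norm is used, and checking it is the main point of care. Hence $\varphi_\alpha(h)\le C\max_z\varphi_\alpha(z)\le Cm$ for every $\alpha$. By homogeneity, every $x$ with $\|x\|=1$ satisfies $\varphi_\alpha(x)=\rho^{-r}\varphi_\alpha(\rho x)\le C m\rho^{-r}$ for all $\alpha$, which contradicts (1) with $M=Cm\rho^{-r}$.

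Finally, Proposition~\ref{setofvectorspacb} follows by applying the theorem with $n=1$, $I=\N_0$ and $\varphi_N(x)=\frac{1}{N+1}\sum_{j=0}^N\|T^jx\|^p$. Each $\varphi_N$ is continuous and homogeneous of degree $p$. For the third condition, use $\|T^j(x-y)\|^p\le 2^{p}\max(\|T^jx\|^p,\|T^jy\|^p)\le 2^p(\|T^jx\|^p+\|T^jy\|^p)$, which after summing gives the constant $C=2^{p+1}$.
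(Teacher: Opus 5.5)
Your proof is correct and is essentially the paper's argument. The paper shows directly that each open set $M_s=\{\sup_\alpha\varphi_\alpha>s\}$ is dense: it replaces an arbitrary point $x$ by one of the $2^n$ points $z$ with $z_i=x_i\pm\varepsilon v_i$, where $v$ is a unit vector with $\sup_\alpha\varphi_\alpha(v)$ large. You instead prove the equivalent statement that each closed set $\{\sup_\alpha\varphi_\alpha\le m\}$ has empty interior, via $h=(a+h)-a$. Both rest on lower semicontinuity with Baire, the third admissibility condition together with the max-norm, and $r$-homogeneity, and your $(2)\Rightarrow(1)$ step (including $\varphi_\alpha(0)=0$) matches the paper's normalization argument.
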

	
	We first record the Baire-category lemma used in the proof. As usual, we denote the unit sphere of a Banach space $X$ by $S_X$.
	
	\begin{lemma}\label{lem_M_s}
		Under the assumptions of Theorem \ref{ThmA}, we let, for each integer $s \in \N$,
		$$M_s:=\Big\{x\in X_1\times\cdots\times X_n:\, \underset{\alpha\in I}{\sup} \ \varphi_{\alpha}(x) >s\Big\}.$$
		If for each $s \in \N$, $M_s\cap S_X\neq\emptyset$, where $X:=X_1\times\cdots\times X_n$, then the set
		$$M=\Big\{x\in X_1\times\cdots\times X_n:\, \underset{\alpha\in I}{\sup} \ \varphi_{\alpha}(x) = \infty \Big\}$$
		is a dense $G_\delta$-subset of $X_1\times\cdots\times X_n$.
	\end{lemma}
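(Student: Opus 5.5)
The plan is to prove that $M$ is a $G_\delta$ set and then that it is dense, using Baire's theorem. Since $M=\bigcap_{s\in\N}M_s$ and $X$ is a complete metric space under the product norm, it suffices to show that each $M_s$ is open and dense.

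\emph{Openness} comes straight from lower semicontinuity. For each $\alpha$, the set $\{x:\varphi_\alpha(x)>s\}$ is open because $\varphi_\alpha$ is lower semicontinuous. Then $M_s=\bigcup_{\alpha\in I}\{x:\varphi_\alpha(x)>s\}$ is a union of open sets, hence open.

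\emph{Density} is the main step, and the plan is a Banach--Steinhaus type argument that uses homogeneity and the quasi-triangle condition.

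First, using homogeneity, I upgrade the hypothesis $M_s\cap S_X\neq\emptyset$ for all $s$. Fix $\delta>0$ and $K>0$. Choose $s$ with $s\,\delta^r>K$ and pick $z\in S_X$ and $\alpha$ with $\varphi_\alpha(z)>s$. Then $\varphi_\alpha(\delta z)=\delta^r\varphi_\alpha(z)>K$, where $\|\delta z\|=\delta$. So arbitrarily small vectors have arbitrarily large values for suitable $\alpha$.

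Next, fix $s$, a point $x=(x_1,\dots,x_n)\in X$ and $\delta>0$; I want a point of $M_s$ within distance $\delta$ of $x$. Take $w=(w_1,\dots,w_n)$ with $\|w\|\le\delta$ and $\varphi_\alpha(w)>Cs$. Consider the $2^n$ vectors $v^{(\sigma)}$, for $\sigma\subset\{1,\dots,n\}$, whose $i$-th coordinate is $x_i+w_i$ if $i\in\sigma$ and $x_i$ otherwise. Each of these lies within distance $\delta$ of $x$ in the max norm. Apply the third admissibility condition with coordinates $x_i+w_i$ and $y_i=x_i$. Their differences are exactly the $w_i$, so
\[
Cs<\varphi_\alpha(w)\le C\max_{\sigma}\varphi_\alpha(v^{(\sigma)}).
\]
Hence some $v^{(\sigma)}$ satisfies $\varphi_\alpha(v^{(\sigma)})>s$, so $v^{(\sigma)}\in M_s$ and lies within $\delta$ of $x$. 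This proves density.

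The obstacle I expect is bookkeeping for this quasi-triangle inequality in the product setting. I must check that the max over the $2^n$ mixed choices really involves only perturbations of $x$ whose size is controlled coordinatewise. The max product norm makes each mixed vector $\delta$-close to $x$, so this works. If the condition were stated differently, with $x_i-y_i$ where $y_i$ is arbitrary, I would simply choose $y_i=-w_i$ or shift accordingly.

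Finally, Baire's theorem gives that $M=\bigcap_s M_s$ is a dense $G_\delta$ set.
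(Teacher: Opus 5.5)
Your proof is correct and follows the paper's argument. Each $M_s$ is open by lower semicontinuity, and Baire's theorem reduces the claim to density, which you get from homogeneity plus the quasi-triangle condition. The only difference is cosmetic: you apply the third admissibility condition to the pair $x+w$, $x$ (difference $w$), while the paper uses $x+\varepsilon v$, $x-\varepsilon v$ (difference $2\varepsilon v$). Either way, one of the $2^n$ mixed vectors lies in $M_s$ within the prescribed distance of $x$.
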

	
	\begin{proof}
		Since every $\varphi_{\alpha}$ is lower semicontinuous, the mapping $\underset{\alpha\in I}{\sup} \ \varphi_{\alpha}$ is lower semicontinuous as well so that each $M_s$ is open. Since $M=\underset{s\geq1}{\bigcap}M_s$, by the Baire category theorem, it suffices to show that every $M_s$ is dense in $X_1\times\cdots\times X_n$. Fix $s\geq1$, $x\in X$ and $\varepsilon>0$. Let $s'$ be an integer such that $s' \geq \dfrac{Cs}{(2\varepsilon)^r}$ and let $v\in M_{s'}\cap S_X$. Let $\mathcal Z$ be the finite set of the $2^n$ vectors $z=(z_1,\ldots,z_n)$ such that $z_i=x_i\pm\varepsilon v_i$. For every $\alpha\in I$, admissibility gives
		\[
		(2\varepsilon)^r\varphi_\alpha(v)
		=\varphi_\alpha((x+\varepsilon v)-(x-\varepsilon v))
		\leq C\max_{z\in\mathcal Z}\varphi_\alpha(z).
		\]
		Taking the supremum over $\alpha$ and using that $\mathcal Z$ is finite, we obtain
		\[
		\frac{(2\varepsilon)^r}{C}\sup_{\alpha\in I}\varphi_\alpha(v)
		\leq \max_{z\in\mathcal Z}\sup_{\alpha\in I}\varphi_\alpha(z).
		\]
		Since $v\in M_{s'}$, there exists $z\in\mathcal Z$ such that $\sup_{\alpha\in I}\varphi_\alpha(z)>s$. Moreover, $\|x-z\|\leq\varepsilon$. Hence $M_s$ is dense, which concludes the proof.
	\end{proof}
	
	\begin{proof}[Proof of Theorem \ref{ThmA}]
		It is straightforward that $(3) \Rightarrow (2)$, and $(2) \Rightarrow (1)$ is clear because if $x=(x_1,\ldots,x_n)\in X_1\times\cdots\times X_n$ satisfies
		$$\underset{\alpha\in I }{\sup}\,\varphi_\alpha(x)=\infty,$$
		then so does $y = \frac{x}{\|x\|}$ thanks to the $r$-homogeneity of each $\varphi_{\alpha}$.
		
		Finally, if $(1)$ is satisfied, then, using the notations of Lemma \ref{lem_M_s},  for every $s\in \N$, $M_s \cap S_X \neq \emptyset$, so by Lemma \ref{lem_M_s}, $(3)$ follows.
	\end{proof}
	
	Proposition \ref{setofvectorspacb} now follows by applying Theorem \ref{ThmA} to the family
	$$\varphi_N : X \ni x \mapsto \dfrac{1}{N+1}\sum_{j=0}^{N} \|T^jx\|^p, \quad N\in \mathbb{N}_0.$$
	Each $\varphi_N$ is continuous and $p$-homogeneous. Moreover, the inequality
	\[
	(a+b)^p\leq 2^{\max\{p-1,0\}}(a^p+b^p), \qquad a,b\geq0,
	\]
	gives
	\[
	\begin{aligned}
		\varphi_N(x-y)
		&\leq 2^{\max\{p-1,0\}}
		\bigl(\varphi_N(x)+\varphi_N(y)\bigr)\\
		&\leq 2^{\max\{p-1,0\}+1}
		\max\{\varphi_N(x),\varphi_N(y)\},
	\end{aligned}
	\]
	so the third admissibility condition holds uniformly in $N$. Hence the family is admissible.

    Theorem~\ref{ThmA} also yields the following analogue of Proposition~\ref{setofvectorspacb} for SCB operators.
    \begin{cor}\label{Cor_SCB}
    Let $X$ be a  Banach space,  $T\in\mathcal{L}(X)$ and  $p>0$.  The following assertions are equivalent:
    \begin{enumerate}
    \item $T$ is not $p$-strongly Ces\`aro bounded.
    \item  There exists  $(x,x^\ast)\in X\times X^\ast$ such that $\underset{n\geq 1 }{\sup}\,\dfrac{1}{n}\sum_{k=1}^{n}\big|  \left\langle x^*,T^k x  \right\rangle\big|^p=+\infty.$
        \item The set of all vectors $(x,x^\ast)\in X\times X^\ast$ such that
        $$\underset{n\geq 1 }{\sup}\,\dfrac{1}{n}\sum_{k=1}^{n} \big|  \left\langle x^*,T^k x  \right\rangle\big|^p=+\infty.$$
        is a dense $G_\delta$-subset of $X\times X^\ast$. 
    \end{enumerate}
\end{cor}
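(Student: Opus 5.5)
We deduce Corollary~\ref{Cor_SCB} from Theorem~\ref{ThmA} with $n=2$, $X_1=X$, $X_2=X^\ast$ (both Banach spaces), and the family indexed by $n\in\N$:
\[
\psi_n(x,x^\ast):=\frac1n\sum_{k=1}^{n}\bigl|\langle x^\ast,T^kx\rangle\bigr|^p .
\]
The plan is to check that $(\psi_n)_{n\geq1}$ is admissible, identify assertion (1) of Theorem~\ref{ThmA} with the failure of $p$-SCB, and read off (2) and (3) directly.

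\emph{Admissibility.} Each $\psi_n$ is continuous on $X\times X^\ast$, since $(x,x^\ast)\mapsto\langle x^\ast,T^kx\rangle$ is a continuous bilinear map; in particular $\psi_n$ is lower semicontinuous. For $\lambda\geq0$ we have $\psi_n(\lambda x,\lambda x^\ast)=\lambda^{2p}\psi_n(x,x^\ast)$, so homogeneity holds with $r=2p$. For the third condition, bilinearity gives
\[
\langle x^\ast-y^\ast,T^k(x-y)\rangle=\langle x^\ast,T^kx\rangle-\langle x^\ast,T^ky\rangle-\langle y^\ast,T^kx\rangle+\langle y^\ast,T^ky\rangle .
\]
The elementary inequality $(a_1+\dots+a_4)^p\leq 4^{\max\{p-1,0\}}\sum a_i^p$ then yields
\[
\psi_n(x-y,x^\ast-y^\ast)\leq 4^{\max\{p-1,0\}}\sum_{(z,z^\ast)}\psi_n(z,z^\ast)\leq 4^{\max\{p,1\}}\max_{(z,z^\ast)}\psi_n(z,z^\ast),
\]
where $z\in\{x,y\}$ and $z^\ast\in\{x^\ast,y^\ast\}$. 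The constant is uniform in $n$, so the family is admissible.

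\emph{Matching the assertions.} Assertions (2) and (3) of the corollary are literally (2) and (3) of Theorem~\ref{ThmA} for this family. It remains to show that (1) of the corollary is equivalent to (1) of Theorem~\ref{ThmA}.

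Suppose $T$ is $p$-SCB. By homogeneity, a pair with $\max\{\|x\|,\|x^\ast\|\}=1$ satisfies $\psi_n(x,x^\ast)\leq\frac1n\sum_{k=0}^{n}|\langle x^\ast,T^kx\rangle|^p\leq\frac{n+1}{n}C_{p,\mathrm{sc}}\leq2C_{p,\mathrm{sc}}$. Hence (1) of Theorem~\ref{ThmA} fails.

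Conversely, suppose $\sup_{n}\sup_{\|x\|,\|x^\ast\|\leq1}\psi_n(x,x^\ast)=K<\infty$. We must recover the $p$-SCB estimate, whose sums are indexed from $k=0$ to $n-1$. For $n\geq2$ we have $\frac1n\sum_{k=0}^{n-1}|\langle x^\ast,T^kx\rangle|^p\leq \frac1n|\langle x^\ast,x\rangle|^p+\frac{n-1}{n}\psi_{n-1}(x,x^\ast)\leq 1+K$. The case $n=1$ is trivial. By homogeneity in each variable separately (the $k$-th term is bi-$p$-homogeneous), this gives the $p$-SCB inequality with constant $1+K$.

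Thus (1) of the corollary is equivalent to (1) of Theorem~\ref{ThmA}, and the conclusion follows. The only step that needs any care is the third admissibility condition, which is handled by the four-term bilinear expansion above; everything else is bookkeeping.
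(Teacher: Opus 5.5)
Your proposal is correct and is essentially the paper's approach: the paper states without further detail that the corollary follows from Theorem~\ref{ThmA} applied to this family on $X\times X^\ast$. Your checks fill in what the paper leaves implicit, namely continuity, $2p$-homogeneity, the four-term bilinear expansion for the third admissibility condition, and the index shift between $\sum_{k=1}^{n}$ and $\sum_{k=0}^{n-1}$ that matches assertion (1) of the corollary with assertion (1) of Theorem~\ref{ThmA}.
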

Note that Corollary~\ref{Cor_SCB} still holds if, in the definition of a SCB operator, we replace the iterates of $T$ by a sequence of operators. We can also apply Theorem~\ref{ThmA} to obtain a similar result for ASKB operators.

	\medskip
	
	\section*{Declaration on the use of generative AI}
	
	During the preparation of this manuscript, the authors used ChatGPT (OpenAI) for language editing and editorial assistance. The authors independently verified all mathematical statements and arguments and take full responsibility for the contents of the paper.

\end{document}